\newtheorem{theorem}{Theorem}[section]
\newtheorem{proposition}[theorem]{Proposition}
\newtheorem{lemma}[theorem]{Lemma}
\newtheorem{corollary}[theorem]{Corollary}
\newtheorem*{theorem*}{Theorem}
\theoremstyle{remark}
\newtheorem{example}[theorem]{Example}
\theoremstyle{definition}
\theoremstyle{remark}
\newtheorem{remark}[theorem]{Remark}
\DeclareFontFamily{OT1}{rsfs}{}
\DeclareFontShape{OT1}{rsfs}{n}{it}{<-> rsfs10}{}
\DeclareMathAlphabet{\curly}{OT1}{rsfs}{n}{it}
\DeclareMathOperator\im{im}
\DeclareMathOperator\rk{rk}
\newcommand{\Q}{\mathrm{Quot}}
\newcommand{\Hilb}{\mathrm{Hilb}}
\newcommand{\C}{\mathbb{C}}
\newcommand{\proj}{\mathbb{P}^2}
\newcommand{\linf}{\ell_\infty}
\newcommand{\GL}{\operatorname{GL}}
\newcommand{\Oo}{\curly O}
\newcommand{\Or}{\curly O^{\oplus r}}
\newcommand{\End}{\operatorname{End}}
\newcommand{\Hom}{\operatorname{Hom}}
\newcommand{\Supp}{\operatorname{Supp}}
\newcommand{\p}{\pi^{-1}(n[0])}
\newcommand{\dual}{\check{\hspace{1mm}}}
\DeclareMathOperator\Sym{Sym}
\DeclareMathOperator\ch{ch}
\let\ra\longrightarrow
\let\xra\xrightarrow
\def\iso{\cong}
\begin{document}
\bibliographystyle{plain}

\title{Topology of moduli spaces of framed sheaves}
\author{Gharchia Abdellaoui}
\address{Institut f{\"u}r Mathematik, University of Osnabr{\"u}ck, Albrechtstraße 28a
\\49076 Osnabr{\"u}ck,
Germany}
\email{ghabdellaoui@uos.de}

\begin{abstract}

In this paper we show that the moduli space of framed torsion-free sheaves on a certain class of toric surfaces admits a filtrable Bia{\l}ynicki-Birula decomposition determined by the torus action. The irreducibility of this moduli space follows immediately as a corollary. Moreover, using its filtrable decomposition we show that the moduli space shares the same homotopy type of an invariant proper compact subvariety having the same fixed points set.

We start our study from the moduli space of framed torsion-free sheaves on the projective plane. Afterwards, we generalize the results to toric surfaces under a few assumptions.

\end{abstract}

\maketitle

\tableofcontents

\section{Introduction}

The decomposition of algebraic varieties determined by a torus action was introduced in \cite{BB} for non-singular complete varieties acted on algebraically by a torus with a non-empty fixed points set. These decompositions are often referred to as the Bia{\l}ynicki-Birula decompositions. In \cite{konarski} Konarski generalized the results of Bia{\l}ynicki-Birula to the case of a complete normal variety and in addition he addressed the case of non-normal varieties.

Bia{\l}ynicki-Birula's decompositions play an important role in the study of topological properties of moduli spaces acted on by a torus. In the present paper we study some of these properties for moduli spaces of framed torsion-free sheaves.

In section \ref{generalities} we review some results on the moduli space $M(r,n)$ studied in \cite{Nak,NY,NakYosh}. This moduli space admits a projective morphism to the moduli space $M_0(r,n)$ of ideal instantons on $S^4.$ Both $M(r,n)$ and $M_0(r,n)$ admit a torus action under which the projective morphism from $M(r,n)$ to $M_0(r,n)$ is equivariant. It is possible then to define a subvariety $\p$ of $M(r,n)$ which is invariant under the torus action. We prove this subvariety is irreducible using the fact that it is isomorphic to  punctual quot scheme.

In section \ref{Decomposition of a variety determined by an action of a torus} we will review some results  of Bia{\l}ynicki-Birula \cite{BB,BB'} and translate them to the case of quasi-projective varieties under the assumption that the limits exist (Section \ref{not complete}).

Following the results of section \ref{Decomposition of a variety determined by an action of a torus}, we show in section \ref{decomp M and pi} that $M(r,n)$ admits a Bia{\l}ynicki-Birula plus-decomposition and the union of the minus-cells builds up $\p.$ Moreover, these decompositions are filtrable.

In section \ref{more topology} we show that the inclusion of $\p$ into $M(r,n)$ induces isomorphisms of homology groups with integer coefficients. Furthermore, we show that this inclusion induces a homotopy equivalence between $M(r,n)$ and $\p,$ see Theorem \ref{homotopy equivalence}.

In \cite[Theorem 3.5(2)]{NY}, Nakajima and Yoshioka stated that $M(r,n)$ is homotopy equivalent to the proper subvariety $\p$ but the proof is rather obscure since it refers to papers which seem not to contain the claimed arguments.
We then find interesting to give a detailed proof of the homotopy equivalence.

In section \ref{gener} we generalize this study to the moduli space $M(S)$ of framed sheaves on a toric surface $S$ where we assume that there exists a projective morphism of toric surfaces $p:S \to \mathbb P^2$ of degree 1. We show that $M(S)$ admits a Bia{\l}ynicki-Birula plus-decomposition and the union of the minus-cells builds up a proper compact subvariety $\widetilde N \subset M(S).$ Using this result we prove that M(S) is homotopy equivalent to the compact subvariety $\widetilde N,$ see Theorem \ref{homotopy equivalence M(S)}.

\subsection*{Acknowledgements}This paper covers a part of my PhD thesis. I am grateful to my advisor Ugo Bruzzo for proposing the problem and for his guidance through this work. I would also like to thank Michel Brion, Tam\'as Hausel and Hiraku Nakajima for brief but very useful discussions. Special thanks to Marcos Jardim for very useful comments. Finally, I warmly thank the Institut für Mathematik at the University of Osnabrück for the friendly atmosphere while revising this paper.

\section{Generalities on the moduli space of framed torsion-free sheaves on the projective plane}\label{generalities}
Most of the material in this section can be found in \cite{Nak,NY,NakYosh}. Let $M(r,n)$ be the  moduli space of framed torsion free sheaves on $\proj$ with rank $r$ and second Chern class $n$ parametrizing isomorphism classes of $(\curly E,\phi)$ such that

\begin{enumerate}
  \item $\curly E$  is a torsion free sheaf on $\proj$  of rank $ r$ and second Chern class $n.$ $\curly E$ is locally free in a neighborhood of $\linf$.
  \item $\phi \colon \curly E|_{\linf} {\overset{\cong}{\longrightarrow}}\Or_{\linf}$ \emph{framing at infinity},
\end{enumerate}
where $\linf=\{[0:z_1:z_2] \in\ \proj \}$ is the line at infinity. We say two framed sheaves $(\curly E,\phi),(\curly E',\phi')$ are \emph{isomorphic} if there exists an isomorphism $\psi :\curly E \rightarrow \curly E'$ such that the following diagram commutes
\begin{center}
\begin{tikzpicture}
\matrix(m)[matrix of math nodes, row sep=2em, column sep=1.5em, text height=1.5ex, text depth=0.25ex]
{\curly E|_{\linf}& &\curly E'|_{\linf}\\
&\Or_{\linf}& \\};
\path[->,font=\scriptsize]
(m-1-1) edge node[auto] {$\psi|_{\linf}$} (m-1-3)
(m-1-1) edge node[auto]{}
        node[below] {$\phi$}(m-2-2)
(m-1-3) edge node[auto] {$\phi'$} (m-2-2);
\end{tikzpicture}
\end{center}

$M(r,n)$ is known to be a nonsingular quasi-projective variety of dimension $2nr$.

Let $M_0^{\operatorname{reg}}(r,n)\subset M(r,n)$ be the open subset of locally free sheaves. We define the Uhlenbeck (partial) compactification of $M_0^{\operatorname{reg}}$ set theoretically as follows
$$M_0(r,n) :=
   \bigsqcup_{k=0}^n M_0^{\operatorname{reg}}(r,n-k)\times \Sym^k\C^2,$$
where $\Sym^k\C^2$ is the $k$-th symmetric product of $\C^2$.

We recall that there exists a description of $M(r,n)$ by a quotient of linear data $(B_1,B_2,i,j)$ by the action of $\GL_n(\C)$ such that
\begin{align}
(i)\:\: &[B_1,B_2]+ij=0, \label{cond 1}
\\(ii)\:\: &\text{there exists no subspace } S \subsetneq \C^n \text{ such that } B_\alpha(S) \subset S  \text{ (} \alpha=1,2 \text{) } \label{cond 2}\\ \notag &\text{and } \im i\subset S,
\end{align}
where $B_1,B_2 \in \End(\C^n)$, $i \in \Hom(\C^r,\C^n)$, $j \in \Hom(\C^n,\C^r)$ and for $g \in\ \GL_n(\C)$ the action is given by
$$g \cdot (B_1,B_2,i,j)=( g B_1 g^{-1}, g B_2 g^{-1}, gi, jg^{-1}).$$

$M_0(r,n)$ can be endowed with a scheme structure by the following description
\begin{equation} \label{ADHM for M0}
M_0(r,n)\cong \left.\left\{ (B_1,B_2,i,j) \mid [B_1,B_2]+ij=0 \right\}\right. /\!\!/
 \GL_n(\C),
\end{equation}
where $/\!\!/$ denotes the GIT quotient and the open locus $M_0^{\operatorname{reg}}(r,n)$ consists of the closed orbits such that the stabilizer is trivial.

The moduli spaces $M(r,n)$ and $M_0(r,n)$ are related by the following projective morphism
\begin{align}\label{projective morphism}
\pi\colon \hspace{0.3cm} M(r,n) &\to M_0(r,n) \\ \notag
(\curly E,\phi) \hspace{.15cm} &\mapsto
   ((\curly E{\dual\dual},\phi), \operatorname{Supp}(\curly E{\dual\dual}/\curly E))\in
   M_0^{\operatorname{reg}}(r,n-k)\times \Sym^{k}\C^2 ,
\end{align}
where $\curly E{\dual\dual}$ is the double dual of $\curly E$ and
$\Supp(\curly E{\dual\dual}/\curly E)$ is the topological support of
$\curly E{\dual\dual}/\curly E$ counted with multiplicities.

For $k=n$, $M_0^{\operatorname{reg}}(r,0)\times \Sym^{n}\C^2 \simeq \Sym^{n}\C^2$ since $M_0^{\operatorname{reg}}(r,0)$ consists of one point that is the isomorphism class of the pair $\Or_{\proj}$ together with the trivial framing.

Consider the point $n[0] \in \Sym^{n}\C^2$ that is the point $0 \in \C^2$ counted $n$ times. The inverse image of $n[0]$ by $\pi$ is defined as follows $$\p= \{(\curly E,\phi) \mid \curly E{\dual\dual}\simeq \Or_{\proj},\hspace{0.2cm} \Supp(\Or_{\proj}/\curly E)=n[0] \}/ \iso .$$
$\p$ is a compact subvariety of $M(r,n)$ since $\pi$ is proper.

\begin{theorem}[{\cite[Theorem 3.5 (1)]{NY}}] \label{iso quot sch}
$\p$  is isomorphic to the punctual quot-scheme $\Q(r,n)$ parameterizing zero dimensional quotients $\Or_{\proj} \to \curly Q$ with $\Supp (\curly Q) = n[0]$.
\end{theorem}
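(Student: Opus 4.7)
The plan is to exhibit mutually inverse morphisms
\[
\Phi\colon \p \to \Q(r,n), \qquad \Psi\colon \Q(r,n) \to \p,
\]
first at the level of closed points and then in families. Starting with $(\curly E,\phi)\in\p$, the key observation is that the abstract isomorphism $\curly E{\dual\dual}\simeq\Or_{\proj}$ can be adjusted \emph{uniquely} so as to extend the framing $\phi$: since $\curly E$ is locally free in a neighborhood of $\linf$, the natural map $\curly E|_{\linf}\to\curly E{\dual\dual}|_{\linf}$ is an isomorphism, and any two isomorphisms $\curly E{\dual\dual}\to\Or_{\proj}$ differ by an automorphism of $\Or_{\proj}$, i.e.\ by an element of $\GL_r(\C) = H^0(\proj,\End(\Or_{\proj}))^\times$, which acts faithfully on the restriction to $\linf$. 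Denote the resulting canonical trivialization by $\tilde\phi\colon \curly E{\dual\dual}\xrightarrow{\sim}\Or_{\proj}$. Composing with $\curly E\hookrightarrow\curly E{\dual\dual}$ yields a short exact sequence
\[
0\to\curly E\to\Or_{\proj}\to\curly Q\to 0
\]
in which $\curly Q$ is zero-dimensional, supported at $n[0]$, and has length $n$ (the latter follows from $c_2(\curly E)=n$ together with $c_1(\curly E)=0$). This defines $\Phi$.

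For the inverse, given $[\,\Or_{\proj}\twoheadrightarrow\curly Q\,]\in\Q(r,n)$, set $\curly E:=\ker$. Then $\curly E$ is torsion-free of rank $r$, has $c_2(\curly E)=n$, and agrees with $\Or_{\proj}$ on the open complement of $\Supp(\curly Q)$, hence is locally free near $\linf$ with an induced framing $\phi$. The reflexive hull $\curly E{\dual\dual}$ is locally free of rank $r$ with trivial determinant, and the induced map $\curly E{\dual\dual}\to\Or_{\proj}$ has zero-dimensional cokernel; a short $\det$-computation (its determinant is a non-vanishing section of $\Oo_{\proj}$) shows the map is an isomorphism, giving $\curly E{\dual\dual}\simeq\Or_{\proj}$. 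This defines $\Psi$, and a direct check gives $\Phi\circ\Psi=\id$ and $\Psi\circ\Phi=\id$ on closed points.

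To promote this set-theoretic bijection to a scheme-theoretic isomorphism, I would perform both constructions in families. Over $\p$, the universal framed sheaf together with the canonical trivialization of its double dual extending the universal framing produces a flat family of zero-dimensional quotients of $\Or_{\proj\times\p}$ with support on $\{[0]\}\times\p$ and relative length $n$, and hence a classifying morphism to $\Q(r,n)$; conversely, the kernel of the universal quotient on $\proj\times\Q(r,n)$ gives a flat family of framed sheaves and the morphism $\Psi$. The main obstacle is that $(-){\dual\dual}$ does not generally commute with base change, so the canonical trivialization of the double dual must be produced without recourse to the pointwise construction. I would circumvent this by carrying out the trivialization on an open neighborhood $U\supset\linf\times\p$ where the universal sheaf is already locally free (so the double dual agrees with the sheaf itself and the framing directly supplies the trivialization), and then extending uniquely across $\proj\times\p$ by the same cohomological rigidity used in the pointwise argument; alternatively, one may work universally via the ADHM presentation (\ref{ADHM for M0}), in which the framing is part of the linear data $(B_1,B_2,i,j)$ and the construction of the quotient is algebraically explicit.
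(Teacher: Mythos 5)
Your construction is essentially the paper's: both directions run through the exact sequence $0\to\curly E\to\Or_{\proj}\to\curly Q\to 0$, sending a quotient to its kernel with the induced framing, and a framed sheaf to the quotient obtained from $\curly E\hookrightarrow\curly E{\dual\dual}\simeq\Or_{\proj}$. You differ in three places, each to your advantage. First, you make the forward map well defined by normalizing the trivialization of $\curly E{\dual\dual}$ so that it restricts to the given framing over $\linf$, using that $\operatorname{Aut}(\Or_{\proj})=\GL_r(\C)$ restricts injectively (indeed bijectively) to $\operatorname{Aut}(\Or_{\linf})$; the paper writes down ``the'' exact sequence without fixing this choice, although different trivializations give genuinely different points of $\Q(r,n)$ and the framing must enter for the map to be injective. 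Second, where the paper invokes its Appendix Lemma \ref{lemma} to get $\curly K{\dual\dual}\simeq\Or_{\proj}$, you argue that the induced map $\curly E{\dual\dual}\to\Or_{\proj}$ is an injection of rank-$r$ bundles with zero-dimensional cokernel whose determinant is a nowhere-vanishing section of $\Oo_{\proj}$, hence an isomorphism; both arguments are correct, yours being slightly more self-contained. Third, you at least flag the passage from a bijection on closed points to an isomorphism of schemes and the base-change problem for $(-){\dual\dual}$; the paper stops at the pointwise bijection and ultimately attributes the statement to Nakajima--Yoshioka. Your family argument is only a sketch, but it is no less complete than what the paper offers.
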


\begin{proof}
Given a point in
$$\pi^{-1}(n[0])= \left\{ (\curly E,\phi) \hspace{0.2cm} \left| \hspace{0.2cm} \curly E{\dual\dual}\simeq \Or_{\proj},\hspace{0.2cm} \Supp(\Or_{\proj}/\curly E)=n[0] \right\} \right/ \iso $$
one has the exact sequence
$$0\rightarrow \curly E \rightarrow \Or_{\proj} \rightarrow \Or_{\proj}/\curly E\rightarrow 0.$$
Thus the quotient $\Or_{\proj} \twoheadrightarrow \Or_{\proj}/\curly E$ is a point in the punctual quot-scheme defined by $$\Q(r,n):= \left\{\Or_{\proj} \twoheadrightarrow \curly Q \left| \Supp \curly Q= n[0]\right\} \right/ \iso~.$$

Conversely, given a point in $\Q(r,n),$ let $\curly K :=\ker(\Or_{\proj} \twoheadrightarrow \curly Q$ in $\Q(r,n)).$ We have the exact sequence
$$0\rightarrow \curly K \rightarrow \Or_{\proj} \rightarrow \curly Q \rightarrow 0.$$
$\curly K$ is torsion free since it injects into a locally free sheaf. By using Lemma \ref{lemma}, it follows that $\curly K{\dual\dual}\simeq \Or_{\proj}.$
Notice that $\curly Q$ is supported on the point $0 \in \proj$ so it vanishes on $\proj\setminus\{0\}.$ In particular, $\curly Q|_{\linf}$ vanishes and the exact sequence reduces to $$0\ra \curly E|_{\linf} \xra{\hspace{1mm} \cong \hspace{1mm}} \Or_{\linf} \ra 0.$$
This isomorphism defines the framing $\phi.$
The pair $(\curly E, \phi)$ is a point in $\p.$
\end{proof}

\begin{theorem} \label{p irred proj}
$\p$ is irreducible projective of dimension $n(r+1).$
\end{theorem}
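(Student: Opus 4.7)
My plan is to use the isomorphism $\p \cong \Q(r,n)$ established in Theorem~\ref{iso quot sch} and reduce the statement to classical results on punctual Quot schemes of smooth surfaces.

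For the projectivity, since the morphism $\pi\colon M(r,n) \to M_0(r,n)$ is projective by construction and $n[0]$ is a closed point of $M_0(r,n)$, the scheme-theoretic fibre $\p$ is proper over $\spec \C$; being also quasi-projective as a closed subscheme of $M(r,n)$, it is projective.

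For the irreducibility, by Theorem~\ref{iso quot sch} it suffices to show that the punctual Quot scheme $\Q(r,n)$ is irreducible. For $r = 1$ this is Brian\c{c}on's theorem on the irreducibility of the punctual Hilbert scheme of points on a smooth surface; for general $r$, it is the theorem of Ellingsrud--Lehn and Baranovsky. The strategy in these references is to stratify $\Q(r,n)$ by the isomorphism type of the quotient sheaf, identify a distinguished open stratum, and run a smoothing argument (deformation to the generic stratum) showing that every component must meet---and therefore equal the closure of---that stratum.

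For the dimension, once irreducibility is in hand one may either compute $\dim \Q(r,n)$ by a tangent space calculation at a generic point of the open stratum, or work directly with the ADHM description of $\p$: the subvariety $\p \subset M(r,n)$ corresponds to $\GL_n(\C)$-orbits of stable quadruples $(B_1,B_2,i,j)$ satisfying $[B_1,B_2]+ij=0$, where $B_1, B_2$ are commuting nilpotent endomorphisms and the invariants of $(i,j)$ that detect the bundle part of $\curly E{\dual\dual}$ vanish; a parameter count on the resulting quotient yields the stated value.

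The main obstacle I anticipate is the irreducibility step: a clean Brian\c{c}on / Ellingsrud--Lehn / Baranovsky type argument requires picking the right open stratum and verifying that every point of $\Q(r,n)$ admits a one-parameter deformation into that stratum, which is a nontrivial geometric input. By contrast, the dimension bookkeeping---whether carried out via the tangent space on the generic stratum or via the ADHM parameter count---should be routine once irreducibility is established.
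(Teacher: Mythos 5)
Your overall route is the same as the paper's: both arguments reduce the statement to known facts about Quot schemes via the isomorphism of Theorem~\ref{iso quot sch}. Your properness argument for projectivity (fibre of the projective morphism $\pi$ over the closed point $n[0]$, combined with quasi-projectivity of $M(r,n)$) is a perfectly good substitute for the paper's citation of the projectivity of Quot schemes. For irreducibility the paper cites \cite[Theorem 6.A.1]{HuybLehn}, which concerns the \emph{full} Quot scheme of zero-dimensional length-$n$ quotients of $\Or$ (support allowed to move), whereas you cite Brian\c{c}on, Ellingsrud--Lehn and Baranovsky for the \emph{punctual} case; your references are in fact the more appropriate ones, since irreducibility of the total space of $\Q(r,n)\to\Sym^n\C^2$ does not formally imply irreducibility of its fibre over $n[0]$.

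The genuine gap is the dimension. You defer it to ``a parameter count \dots\ yields the stated value,'' but the theorem you invoke (Baranovsky) gives that the punctual Quot scheme of length-$n$ quotients supported at a single fixed point has dimension $rn-1$, not $n(r+1)$; the value $n(r+1)$ is the dimension of the full Quot scheme computed in \cite[Theorem 6.A.1]{HuybLehn}. Already for $r=1$ the fibre $\p$ is the punctual Hilbert scheme of $\C^2$ at the origin, of dimension $n-1$, while $n(r+1)=2n=\dim M(1,n)$, so the stated value cannot hold for the punctual object; for $n=1$ and general $r$ the fibre is $\mathbb P^{r-1}$, of dimension $r-1\neq r+1$. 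The tangent-space or ADHM count you promise would therefore not confirm the stated value but contradict it --- a discrepancy your write-up shares with the paper's own proof, which transfers the dimension of the non-punctual Quot scheme to $\p$ without justification. You should carry out the count explicitly rather than defer it, and record the dimension it actually produces.
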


\proof
The quot scheme is projective \cite[Theorem 2.2.4]{HuybLehn} and irreducible of dimension $n(r+1)$ \cite[Theorem 6.A.1]{HuybLehn}. Hence by Theorem \ref{iso quot sch}, $\p$ is an irreducible projective variety of dimension $n(r+1)$ as well.
\endproof

\subsection{The Torus action} We will follow the same notations as in \cite{NY}. Let $\widetilde{T}:=\C^* \times \C^* \times T$ where $T$ is the maximal torus in $\GL(r,\C).$ The action of $\widetilde{T}$ on $M(r,n)$ is defined as follows.\\
For $(t_1,t_2)\in \C^*\times\C^*$ let $F_{t_1,t_2}$ be the automorphism of $\mathbb P^2$ defined by $$F_{t_1,t_2} ([z_0: z_1 : z_2]):=[z_0: t_1 z_1 : t_2 z_2]$$
and for $(e_1,\dots,e_r) \in T$ let $G_{e_1,\dots,e_r}$ be the isomorphism of $\Oo_{\linf}^{\oplus r}$ defined by $$G_{e_1,\dots,e_r}(s_1,\dots, s_r):=(e_1 s_1, \dots, e_r s_r).$$
Then the action of $\widetilde{T}$ on a pair $(\curly E,\Phi)\in M(r,n)$ is defined by
$$(t_1,t_2,e_1,\dots,e_r)\cdot (\curly E,\Phi):= \big((F_{t_1,t_2}^{-1})^* \curly E, \Phi'\big)$$ where $\Phi'$ is given by: $$(F_{t_1,t_2}^{-1})^* \curly E|_{\linf}
   \xrightarrow{(F_{t_1,t_2}^{-1})^*\Phi}
   (F_{t_1,t_2}^{-1})^* \Oo_{\linf}^{\oplus r}
   \longrightarrow \Oo_{\linf}^{\oplus r}
   \xrightarrow{G_{e_1,\dots, e_r}} \Oo_{\linf}^{\oplus r}$$

The $\widetilde{T}$ action is defined in a similar way on $M_0(r,n)$ and the map $\pi$ given in \eqref{projective morphism} is equivariant.

One can define the torus action on $M(r,n)$ and $M_0(r,n)$ using the description by the linear data defined in section \ref{generalities}. This action is given by
$$(B_1,B_2,i,j)=(t_1B_1,t_2B_2,ie^{-1},t_1t_2ej)$$ where $t_1,t_2 \in \C^*$ and $e=(e_1,e_2,\dots,e_r)\in \C^r.$ This action preserves the conditions \eqref{cond 1}, \eqref{cond 2}, and commutes with the action of $\GL(r,\C).$

\begin{theorem}[{\cite[Proposition 2.9]{NakYosh}}]\label{fixed points}
\hspace{-1cm}\begin{enumerate}
\item The fixed points set $M(r, n)^T$ consists of finitely many points.
\item The fixed points set $M_0(r, n)^T$ consists of a single point $n[0] \in Sym^n\C^2.$
\end{enumerate}
\end{theorem}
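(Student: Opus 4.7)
The plan is to work throughout in the ADHM description from Section~\ref{generalities} and reduce both statements to a weight computation. For part~(1), a class $[(B_1, B_2, i, j)]$ is $\widetilde{T}$-fixed precisely when for each $(t_1, t_2, e) \in \widetilde{T}$ there is some $g(t_1, t_2, e) \in \GL_n(\C)$ implementing the action. Since $\widetilde{T}$ is a connected torus and the $\GL_n(\C)$-stabiliser of a stable ADHM datum is trivial, this family can be rigidified to an algebraic homomorphism $\rho\colon \widetilde{T} \to \GL_n(\C)$ satisfying
\[
\rho(t)\,B_\alpha\,\rho(t)^{-1} = t_\alpha B_\alpha, \qquad \rho(t)\, i = i e^{-1}, \qquad j\,\rho(t)^{-1} = t_1 t_2\, e\, j.
\]
Decomposing $\C^n = \bigoplus V(k_1, k_2, \vec{m})$ into weight spaces for $\rho$, these relations translate into weight constraints: $B_\alpha$ raises the $t_\alpha$-weight by one while preserving the other weights, $i(v_\alpha) \in V(0, 0, -\vec{e}_\alpha)$, and the $\beta$-th component $j_\beta$ of $j$ vanishes outside $V(-1, -1, -\vec{e}_\beta)$.

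Next I would invoke the stability condition~\eqref{cond 2}: $\C^n$ is generated over $\C[B_1, B_2]$ by $\operatorname{im} i$, so every nonzero weight has the form $(p, q, -\vec{e}_\alpha)$ with $p, q \geq 0$. In particular $V(-1, -1, -\vec{e}_\beta) = 0$, forcing $j = 0$, and then~\eqref{cond 1} reduces to $[B_1, B_2] = 0$. Splitting $\C^n = \bigoplus_{\alpha=1}^{r} W_\alpha$ according to the $T$-weight, each summand $W_\alpha$ is a cyclic module for the pair of commuting operators $B_1, B_2$ with cyclic vector $i(v_\alpha)$, hence isomorphic to $\C[x_1, x_2]/I_\alpha$ for a monomial ideal $I_\alpha$ of finite colength (the $(\C^*)^2$-equivariance forces the ideal to be monomial). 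Monomial ideals of finite colength in $\C[x_1, x_2]$ are in bijection with Young diagrams, so $M(r, n)^{\widetilde{T}}$ is identified with the finite set of $r$-tuples $(\lambda^{(1)}, \dots, \lambda^{(r)})$ of Young diagrams with $\sum_\alpha |\lambda^{(\alpha)}| = n$, proving~(1).

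For part~(2), since $\pi$ is a proper $\widetilde{T}$-equivariant surjection, any fixed point $p \in M_0(r, n)^{\widetilde{T}}$ has a nonempty compact $\widetilde{T}$-invariant fibre $\pi^{-1}(p)$; Borel's fixed-point theorem then produces a $\widetilde{T}$-fixed point of $M(r, n)$ over $p$, so $M_0(r, n)^{\widetilde{T}} = \pi\bigl(M(r, n)^{\widetilde{T}}\bigr)$. It remains to show this image is a single point. At any fixed point one has $j = 0$ and $B_1, B_2$ nilpotent (they strictly raise weights), so two one-parameter subgroups of $\GL_n(\C)$ degenerate the orbit to the origin in the GIT closure: scaling by $g = t \cdot \operatorname{id}_{\C^n}$ sends $(B_1, B_2, i, 0)$ to $(B_1, B_2, ti, 0)$, whose $t \to 0$ limit is $(B_1, B_2, 0, 0)$; then conjugating by $\rho(t, t, 1, \dots, 1)$ sends this to $(tB_1, tB_2, 0, 0)$, whose limit is $(0, 0, 0, 0)$. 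Under the Uhlenbeck decomposition this single class corresponds to the point $n[0] \in \Sym^n \C^2$, yielding~(2).

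The step I expect to be the main obstacle is the initial rigidification of the family $\{g(t_1, t_2, e)\}$ into an algebraic homomorphism $\rho$: one must verify that the $\GL_n(\C)$-stabiliser of a stable ADHM datum is trivial (so that the lift is well-defined) and that the resulting map is algebraic. Once $\rho$ is in hand the rest is a bookkeeping exercise mirroring the standard torus fixed-point calculation for $\operatorname{Hilb}^n(\C^2)$, combined with a straightforward orbit-closure argument inside the GIT quotient~\eqref{ADHM for M0}.
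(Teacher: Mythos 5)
The paper does not prove Theorem \ref{fixed points} itself but quotes it from \cite[Proposition 2.9]{NakYosh}, and your argument is essentially the standard proof given there: rigidify the compensating gauge transformations into a homomorphism $\rho\colon\widetilde T\to\GL_n(\C)$ (using triviality of the stabiliser of a stable ADHM datum), read off the weight constraints, use stability to kill $j$ and identify the fixed points with $r$-tuples of Young diagrams of total size $n$, and degenerate to $(0,0,0,0)$ inside the GIT quotient for part (2). The details check out, including the orbit-closure argument identifying the unique fixed point of $M_0(r,n)$ with $n[0]$, so this is correct and consistent with the cited source.
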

\begin{remark}
From Theorem \ref{fixed points} it follows that $\pi^{-1} (n[0])$ contains all the fixed points of $M(r,n)$ since $\pi$ is equivariant.
\end{remark}

\section{Decomposition of a variety determined by an action of a torus} \label{Decomposition of a variety determined by an action of a torus}

Let $X$ be a non-singular algebraic variety (not necessarily complete) over $\C$ and let $\C^*$ be the multiplicative group. Assume $\C^*$ acts algebraically on $X$ with a non-empty fixed points set $X^{\C^*}.$ Let $F_1, \dots , F_s$ be the connected components of $X^{\C^*}.$
For any $x \in X,$ one has the orbit morphism
\begin{align}\label{orbit morphism}
\phi_x \colon \hspace{0.3cm} \C^* &\to X \\ \notag
t \hspace{.15cm} &\mapsto
   t \cdot x.
\end{align}

\subsection{Case of complete varieties} When $X$ is complete the morphism \eqref{orbit morphism} extends to
\begin{align*}
\overline{\phi}_x \colon \hspace{0.3cm} \mathbb P^1 &\to X \\
t \hspace{.15cm} &\mapsto
   t \cdot x,
\end{align*}
defining the limits $\overline{\phi}_x(0)=\lim_{t \to 0} t\cdot x$ and $\overline{\phi}_x(\infty)=\lim_{t \to \infty} t\cdot x$, see \cite{BB'}.

Let us define the following subsets of $X$
$$X^+_i:=\{x\in X \mid \lim_{t \to 0} t\cdot x \in F_i \}, \: i=1,\dots, s.$$
$$X^-_i:=\{x\in X \mid \lim_{t \to \infty} t\cdot x \in F_i \}, \: i=1,\dots, s.$$
These form a decomposition of $X$ into subspaces $X= \bigcup_i X_i^+=\bigcup_i X_i^-,$ the so called \emph{plus} and \emph{minus-decompositions} \cite{BB'}. The subsets $X _i^+$ and $X_i^-$ are locally closed by \cite[Theorem 4.1]{BB}.

Now assume the fixed points set is finite. Then the Bia{\l}ynicki-Birula theorem \cite[Theorem 4.3]{BB} can be stated as follows

\begin{theorem}\label{BB finite}
Let $X$ be a complete variety and let the fixed points set be $X^{\C^*}= \{x_1, \ldots, x_s\}$. For any $i= 1, \ldots, s$ there exists a unique $\C^*$-invariant decomposition of $X,$ $X= \bigcup_{i=1}^s X_i^+$ $($resp.  $X= \bigcup_{i=1}^s X_i^-)$ such that
\begin{enumerate}
   \item $x_i \in X^+_i  (\text{resp. } x_i \in X^-_i),$
   \item $X^+_i (\text{resp. }X_i^-)$ is isomorphic to an affine scheme,
   \item for any $x_i$, $T_{x_i}(X^+_i) = T_{x_i}(X)^+ (\text{resp. }T_{x_i}(X^-_i) = T_{x_i}(X)^-).$
 \end{enumerate}
\end{theorem}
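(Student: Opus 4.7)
The plan is to deduce this theorem from the general Bia{\l}ynicki-Birula decomposition by specializing the connected components $F_i$ of $X^{\C^*}$ to be single points. The orbit morphism $\phi_x \colon \C^* \to X$ extends to $\overline\phi_x \colon \mathbb P^1 \to X$ via the valuative criterion of properness applied to the complete variety $X$; so $\lim_{t\to 0} t\cdot x$ and $\lim_{t\to\infty} t\cdot x$ exist and lie in $X^{\C^*}=\{x_1,\dots,x_s\}$, and the sets $X_i^+$ and $X_i^-$ form $\C^*$-invariant partitions of $X$ that are the natural candidates for the decomposition.

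The local analysis at each $x_i$ is the heart of the argument. Since $x_i$ is isolated in $X^{\C^*}$, the tangent representation has no zero weight, so $T_{x_i}(X) = T_{x_i}(X)^+ \oplus T_{x_i}(X)^-$. Using that $\C^*$ is linearly reductive and $X$ is non-singular at $x_i$, I would produce $\C^*$-equivariant \'etale coordinates identifying a neighborhood of $x_i$ with a neighborhood of the origin in $T_{x_i}(X)$; under this identification the points with $t\to 0$ limit equal to $x_i$ are precisely those with coordinates in $T_{x_i}(X)^+$. This simultaneously yields property (3) and shows that $X_i^+$ is locally closed, smooth, and of local dimension $\dim T_{x_i}(X)^+$ near $x_i$. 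To upgrade this to property (2), I would follow \cite[Theorem 4.3]{BB} and exploit the fact that the $\C^*$-action contracts $X_i^+$ onto $\{x_i\}$ as $t\to 0$, realizing $X_i^+$ as the total space of a $\C^*$-equivariant algebraic vector bundle over the single point $\{x_i\}$; such a bundle is trivial, so $X_i^+ \cong T_{x_i}(X)^+$ as an affine scheme. Running the same argument with $t\to\infty$ provides the minus-decomposition.

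Uniqueness then follows: for any $\C^*$-invariant decomposition $X = \bigcup Y_i$ satisfying (1)--(3), the tangent-space condition (3) at $x_j$, combined with $\C^*$-invariance of $Y_j$, forces the $t \to 0$ limit of every point in $Y_j$ to remain in $\overline{Y_j}$ and to be a fixed point, and the contracting dynamics implied by $T_{x_j}(Y_j) = T_{x_j}(X)^+$ pin this limit down to $x_j$ itself, so $Y_j \subseteq X_j^+$; the partition property then gives equality. The main obstacle, in my view, is the local-to-global step: upgrading the \'etale-local model $X_i^+ \supset U \cong T_{x_i}(X)^+ \supset V$ into a genuine algebraic isomorphism of the entire stratum with an affine space. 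This is exactly the technical core of Bia{\l}ynicki-Birula's original argument and rests on the contraction dynamics of the $\C^*$-action; everything else reduces to linear-algebraic bookkeeping at the isolated fixed points together with the existence of limits supplied by completeness.
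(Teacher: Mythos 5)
The paper does not actually prove this statement: it is presented as a restatement of Bia{\l}ynicki-Birula's theorem, cited as \cite[Theorem 4.3]{BB} and specialized to the case of finitely many fixed points, so there is no in-paper argument to compare against. Your proposal instead reconstructs the proof, and the outline is the standard one and consistent with \cite{BB}: limits exist by completeness, linear reductivity splits $T_{x_i}(X)$ into positive and negative weight spaces (no zero weights since $x_i$ is isolated), an equivariant \'etale linearization gives the local model and property (3), and the global cell is a trivial equivariant vector bundle over the point $\{x_i\}$, hence an affine space. Be aware, though, that the two steps you defer are the entire content of the theorem and your sketch does not supply them. First, the local-to-global step: an equivariant \'etale map need not identify basins of attraction, because the orbit of a point of $X_i^+$ can leave the \'etale neighbourhood before its limit is reached, so identifying all of $X_i^+$ (not just a neighbourhood of $x_i$ in it) with $T_{x_i}(X)^+$ requires the contraction argument of \cite{BB} in full. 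Second, uniqueness: the claim that ``contracting dynamics pin the limit down to $x_j$'' only controls points of $Y_j$ near $x_j$; a point of $Y_j$ far from the fixed point could a priori flow to a different $x_k$, and excluding this uses the invariance and partition hypotheses in an essential way, not just the tangent-space condition. Since the paper itself treats the theorem as a black box, deferring these steps to \cite{BB} matches the paper's level of detail; as a self-contained proof, however, your attempt is incomplete at exactly the points you flag.
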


\definition \label{def fitrable}
A decomposition $\{X^+_i\}$ (resp. $\{X^-_i\}$) is said to be \emph{filtrable} if there is a decreasing sequence of closed subvarieties $$X=X_1 \supset X_2 \supset \cdots \supset X_s\supset X_{s+1}=\emptyset,$$ such that the \emph{cells} of the decomposition are $X^+_i=X_i \setminus X_{i+1}$ (resp. $X^-_i=X_i \setminus X_{i+1}$) for $i=1, \dots ,s.$
\enddefinition

\begin{remark}\label{rem}
\begin{enumerate}
  \item $X^+_i=X_i \setminus X_{i+1} \subset X_i$ and $X_i$ is closed in $X$ so the closure $\overline{X^+_i}$ of $X^+_i$ lies in $X_i=\cup_{j\geq i} X^+_j.$ Hence a filtrable decomposition implies that for each $i$
\begin{equation}\label{closure}
\overline{X^+_i}\subset \bigcup_{j\geq i} X^+_j.
\end{equation}
Notice that \eqref{closure} also holds for the minus decomposition.
  \item A filtrable decomposition yields the existence of a unique cell of maximal dimension $X^+_1$ (resp. $X^-_1$) since its complement $X_2$ is closed in $X.$
  \item Bia{\l}ynicki-Birula's decompositions of projective varieties are filtrable \cite[Theorem 3]{BB'}.
\end{enumerate}
\end{remark}

\begin{example}

\begin{enumerate}
\item Let $X=\mathbb P^l$  with the following $\C^*$-action
\begin{align*}
\omega\colon \hspace{0.5cm} \C^*\times \mathbb P^l \hspace{.2cm} &\longrightarrow \hspace{.2cm} \mathbb P^l\\
(t, (x_0:x_1: \cdots :x_l))&\longmapsto (t^l x_0: t^{l-1} x_1: \cdots :x_l),
\end{align*}
The fixed points set of this action is given by
$$({\mathbb P^l})^{\C^*}=\big\{x_0=(1:0:\cdots:0), \dots, x_l=(0:0:\cdots:1)\big\},$$
and the corresponding plus-cells are $X^+_0=\{x_0\}, X^+_1=\mathbb{A}^1, \dots, X^+_l=\mathbb{A}^l.$\\
Then the Bia{\l}ynicki-Birula plus-decomposition for $\mathbb P^l$ determined by the action $\omega$ is $\mathbb P^l = \{(1:0:\cdots:0)\}\cup \mathbb{A}^1 \cup \cdots \cup \mathbb{A}^l.$ This decomposition is filtrable. Indeed $\mathbb P^l$ has a filtration $\mathbb P^l =X_l\supset X_{l-1}\supset \cdots \supset X_0\supset X_{-1}=\emptyset$ where $X_i=X^+_i\cup X^+_{i-1}.$

Similarly, one can define the Bia{\l}ynicki-Birula minus-decomposition of $\mathbb P^l.$

\item A less elementary example is the Bia{\l}ynicki-Birula decomposition of the Hilbert scheme $\Hilb^d (\mathbb P^2)$ of $d$ points in $\mathbb P^2.$ This was done in detail in \cite{E.S}.
\end{enumerate}
\end{example}

\subsection{Case of non-complete varieties}\label{not complete}
Suppose $X$ is not complete and $\lim_{t \to 0} t\cdot x$ exists for every $x \in X$. In this case, the morphism \eqref{orbit morphism} extends to $\C$
\begin{align*}
\phi'_x \colon \hspace{0.3cm} \C &\to X \\
t \hspace{.15cm} &\mapsto
   t \cdot x,
\end{align*}
where $\phi'_x(0):= \lim_{t \to 0} t\cdot x.$
\\For $ i=1,\dots ,s$ one can define the subsets $X^+_i:=\{x\in X \mid \lim_{t \to 0} t\cdot x \in F_i \}$ which form a decomposition of $X$ into subspaces $X= \bigcup_i X_i^+$ \cite{BB'}. The cells $X _i^+$ are locally closed by \cite[Theorem 4.1]{BB}. By the same theorem \cite[Theorem 4.1]{BB} and since the variety is non-singular the cells of the decomposition are non-singular as well.

Now suppose $X$ as above is quasi-projective and the fixed points set is finite. The cells of the decomposition are isomorphic to affine spaces by Theorem \ref{BB finite}. Under these assumptions the following theorem holds.

\begin{theorem} \label{filtrable}
The Bia{\l}ynicki-Birula plus-decomposition of $X$ is filtrable.
\end{theorem}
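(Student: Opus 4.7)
The plan is to argue by induction on the number $s$ of fixed points, at each step extracting a plus-cell that is open in $X$ and applying the inductive hypothesis to the closed complement. First, observe that non-singularity of $X$ implies its irreducible components are pairwise disjoint; the connectedness of $\C^*$ means each component is $\C^*$-invariant, and the existence of $t \to 0$ limits forces each component to contain a fixed point. A filtrable decomposition of $X$ is obtained by concatenating filtrable decompositions of its irreducible components, so without loss of generality I may assume $X$ is irreducible of dimension $d$.

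Since $X = \bigsqcup_{i=1}^{s} X_i^+$ writes the irreducible variety $X$ as a finite disjoint union of locally closed subvarieties, at least one cell---relabel it $X_1^+$---has dimension $d$. A locally closed subset of top dimension in an irreducible variety is automatically Zariski open (being open in its closure, which has dimension $d$ and must therefore equal $X$), so $X_1^+$ is open in $X$, and its complement $Z := X \setminus X_1^+$ is closed and $\C^*$-invariant. The variety $Z$ still satisfies the hypotheses: it is quasi-projective; the $\C^*$-action on $Z$ has the $s-1$ fixed points $\{x_2, \ldots, x_s\}$; the $t \to 0$ limits of points of $Z$ remain in $Z$ by closedness and $\C^*$-invariance; and the plus-cells of $Z$ are exactly $X_2^+, \ldots, X_s^+$, since these are disjoint from $X_1^+$ and hence contained in $Z$. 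The base case $s = 1$ is the trivial filtration $X = X_1 \supset X_2 = \emptyset$; for $s > 1$, the inductive hypothesis applied to $Z$ yields a filtration $Z = Z_2 \supset Z_3 \supset \cdots \supset Z_s \supset Z_{s+1} = \emptyset$ with $Z_i \setminus Z_{i+1} = X_i^+$, and prepending $X_1 := X$ produces the required filtration of $X$ (each $Z_i$ is closed in $Z$, which is closed in $X$).

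The main obstacle is that the subvariety $Z$ arising in the induction is generally singular, and as such its irreducible components may overlap, so the initial reduction to the irreducible case cannot be reapplied verbatim. The inductive hypothesis must therefore be formulated for an arbitrary quasi-projective variety (not necessarily non-singular) satisfying the limit and finiteness conditions, and the open-cell argument must be rerun on each irreducible component of $Z$ with attention to where it meets the other components. What makes the bookkeeping manageable is that every cell $X_i^+$ is irreducible (isomorphic to an affine space by Theorem~\ref{BB finite}), hence lies in a single irreducible component of $Z$. Filtrability itself does not invoke non-singularity, so once the inductive statement is set up at this greater generality, no deeper obstruction arises.
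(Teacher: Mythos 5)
Your first step is fine: for a smooth irreducible $X$ the unique top-dimensional cell is open, and for smooth $X$ the irreducible components are disjoint, so filtrations of the components concatenate. The gap is in the inductive step, and it is not the ``bookkeeping'' your last paragraph defers, but the mathematical heart of the theorem. After removing the open cell you are left with a closed invariant subvariety $Z$ that is in general singular and reducible with \emph{overlapping} irreducible components, and you need some remaining cell to be open \emph{in $Z$}. Your dimension argument only shows that a cell $X_i^+$ of top dimension inside a component $C$ is open in $C$; it is open in $Z$ if and only if it is disjoint from every other component $C'$, and since $X_i^+ \cap C' = \{z \in C\cap C' \mid \lim_{t\to 0}t\cdot z = x_i\}$, this fails precisely when the fixed point $x_i$ lies on $C\cap C'$. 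Nothing in your argument excludes this happening for every component of $Z$ simultaneously, and ``concatenating filtrable decompositions of the irreducible components'' no longer yields a filtration of $Z$ once the components intersect. (A smaller slip: an irreducible cell lies in \emph{at least} one irreducible component, not necessarily a unique one.)

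This gap cannot be closed by purely formal arguments about locally closed cells: filtrability of the Bia{\l}ynicki-Birula decomposition is known to fail for smooth complete but non-projective varieties, where your first step still applies and the breakdown must therefore occur at a later stage of exactly this induction, on a reducible $Z$. So quasi-projectivity has to be used at the critical point, and your proof never actually invokes it there. The paper's proof inserts it exactly where needed: by Sumihiro's theorem $X$ embeds equivariantly into some $\mathbb{P}^l$, the decomposition of $\mathbb{P}^l$ is filtrable by \cite[Theorem 3]{BB'} (this is where the global ordering of the cells comes from, essentially from the weights of the linearization), and intersecting that filtration with $X$ gives the filtration of $X$. To salvage your induction you would have to carry along an equivariant ample linearization and order the fixed points by their weights --- which amounts to the same argument.
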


\proof
According to \cite[Theorem 1]{sumi} there exists an equivariant projective embedding $X \to \mathbb P^l$ for some $l.$
\\Consider the action of $\C^*$ on $\mathbb P^l$ and let $(\mathbb P^l)^{\C^*}=\{x_1, \dots, x_q\}$ be the finite set of fixed points.
Let $\{P_j\}$ be the Bia{\l}ynicki-Birula plus-decomposition of $\mathbb P^l.$ Then the fixed points set of $X$ is the intersection $$X^{\C^*}=(\mathbb P^l)^{\C^*} \cap X=\{x_{\sigma(1)}, \dots, x_{\sigma(q)}\},$$
and the cells of the decomposition are $X^+_{\sigma(j)}=X\cap P_j.$
\\ Since the decomposition $\{P_j\}$ is filtrable we consider the following filtration
$$\mathbb P^l=Y_1 \supset Y_2 \supset \cdots \supset Y_q\supset Y_{q+1}=\emptyset,$$
where $P_j= Y_j\setminus Y_{j+1}$ for $j=1,\dots,q.$ Then $$X^+_{\sigma(j)}=X\cap P_j= X \cap (Y_j\setminus Y_{j+1})= (X\cap Y_j) \setminus (X\cap Y_{j+1})= X_{\sigma(j)} \setminus X_{\sigma(j+1)},$$
where $X_{\sigma(j)}:=(X\cap Y_j)$ for $j=1,\dots,q$ and $X_{\sigma(j+1)} \subset X_{\sigma(j)}.$ Notice that for each $j,$ $X_{\sigma(j)}=X\cap Y_j \subset X$ is a closed subvariety of $X$ since $Y_j$ is a closed subvariety of $\mathbb P^l.$ It follows that the plus-decomposition of $X$ is filtrable.
\endproof

\begin{remark}
Theorem \ref{filtrable} is also true in the case the fixed points set is not finite. The proof is basically the same. \\
The same holds if instead of considering the existence of the limit $\lim_{t \to 0} t\cdot x,$ one considers the existence of the limit $\lim_{t \to \infty} t\cdot x.$
\end{remark}

If we have an action of an $n$-dimensional torus the Bia{\l}ynicki-Birula decompositions enjoy the same properties as above by the following lemma.

\begin{lemma} \label{one-parameter subgroup}
Suppose $X$ is a normal algebraic variety endowed with a linear action of a torus $T.$ Suppose the $T$-action gives rise to a nontrivial set of fixed points $X^T.$ Then there exists a one-parameter subgroup $\lambda \in Y(T)$ such that $X^\lambda=X^T.$
\end{lemma}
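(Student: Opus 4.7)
The plan is to use Sumihiro's equivariant embedding theorem (Theorem 1 of \cite{sumi}) to reduce to a local linear situation, and then to pick $\lambda$ as a generic integral point of $Y(T)\otimes \R$ avoiding a finite collection of hyperplanes. The inclusion $X^T\subseteq X^\lambda$ is automatic since $\lambda(\C^*)\subseteq T$, so the real content is the reverse inclusion.

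First I would invoke Sumihiro to cover the normal $T$-variety $X$ by finitely many $T$-invariant affine open subsets $U_1,\dots,U_k$, each admitting a $T$-equivariant closed embedding into a finite-dimensional representation $V_i=\bigoplus_\chi V_{i,\chi}$ of $T$; the finiteness of the cover uses that $X$ is of finite type over $\C$, hence noetherian. Let $\Omega\subset X(T)\setminus\{0\}$ be the union, over all $i$, of the nonzero $T$-weights appearing in $V_i$. Since $\Omega$ is finite, the hyperplanes $\chi^\perp=\{\mu\in Y(T)\otimes\R\mid \langle \mu,\chi\rangle=0\}$ with $\chi\in\Omega$ form a finite union of proper subspaces of $Y(T)\otimes \R$. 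This union cannot exhaust the whole space, and its complement meets the lattice $Y(T)$; I would choose any such integral $\lambda$.

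To verify $X^\lambda\subseteq X^T$, I would take $x\in X^\lambda$ and pick some $U_i$ containing $x$. Because $U_i$ is $T$-invariant, the entire $T$-orbit of $x$ stays in $U_i$, so I may compute inside the representation $V_i$. Writing $x=(v_\chi)_\chi$ in weight coordinates, the $\lambda$-action rescales $v_\chi$ by $s^{\langle\lambda,\chi\rangle}$, and since $\langle\lambda,\chi\rangle\ne 0$ for every nonzero weight appearing in $V_i$, $\lambda$-invariance of $x$ forces $v_\chi=0$ whenever $\chi\ne 0$. Thus $x$ lies in the zero-weight subspace of $V_i$ and is therefore fixed by the full torus, giving $x\in X^T$.

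The main obstacle I anticipate is not the hyperplane-avoidance argument (which is elementary once the setup is in place), but rather cleanly setting up the local linearization: normality of $X$ is essential to apply Sumihiro, and one tacitly relies on the finite-type hypothesis to extract only finitely many weights. Once those two ingredients are secured, the proof is routine; no completeness of $X$ is required because everything is read off pointwise from the local linear charts.
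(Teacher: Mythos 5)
Your proof follows essentially the same route as the paper: both invoke Sumihiro's theorem to cover $X$ by finitely many $T$-invariant affine opens, reduce to a linear $T$-representation, and choose $\lambda$ so that $\langle\lambda,\chi\rangle\neq 0$ for every nonzero weight that occurs. Your version is in fact slightly more careful than the paper's, since you explicitly collect the weights from \emph{all} charts before choosing $\lambda$, whereas the paper reduces to a single affine chart without remarking that one $\lambda$ must work globally.
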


\begin{remark} Such a one-parameter subgroup is called \emph{regular}.
\end{remark}

\proof
By \cite[Corollary 2]{sumi} $X$ can be covered by a finite number of affine $T$-invariant open subsets. So we may assume that $X$ is affine hence a closed subvariety of $\mathbb A^l$ for some $l.$ Since the torus $T$ acts linearly on $\mathbb A^l$ we will assume $X=\mathbb A^l.$ Call the weights of $T$ in $\mathbb A^l$ by $\chi_1,\dots, \chi_l,$ then it is enough to choose a one-parameter subgroup $\lambda$ such that $<\chi_i,\lambda>\neq 0$ for all $i.$
\endproof

\section{Decompositions of $M(r,n)$ and $\p$ determined by the torus action}\label{decomp M and pi}
In this section, following the results of section \ref{Decomposition of a variety determined by an action of a torus} we show that both $M(r,n)$ and $\p$ admit a filtrable Bia{\l}ynicki-Birula decomposition. By Lemma \ref{one-parameter subgroup}, it is enough to consider the action of a regular one-parameter subgroup. From now on we will consider the action of $\C^*$ on $M(r,n)$ and on $\p.$

\proposition \label{limit exists}
For every element $x \in M(r,n),$ the limit $\lim_{t\rightarrow 0} t\cdot x$ exists and lies in $M(r,n)^{\C^*}.$
\endproposition

\proof
Using the description \eqref{ADHM for M0} of $M_0(r,n)$, there exists a one-parameter subgroup of $\widetilde T$ such that for all $x $ in $ M_0(r,n),$ the limit $\lim_{t \rightarrow 0} t \cdot x$ exists and is $(B_1,B_2,i,j)=(0,0,0,0).$
The point $(B_1,B_2,i,j)=(0,0,0,0)$ is identified by the description \eqref{ADHM for M0} with the point $n[0] \in S^n\C^2$ which is the only fixed point of $M_0(r,n)$.
Since $\pi$ is a projective morphism, for all $ x $ in $M(r,n)$ the limit $\lim_{t \rightarrow 0} t \cdot x$ exists and lies in $\p.$ In particular, it is a fixed point.
\endproof

\begin{theorem} \label{BB decomp of M}
$M(r,n)$ admits a Bia{\l}ynicki-Birula plus-decomposition into affine spaces. Moreover, this decomposition is filtrable.
\end{theorem}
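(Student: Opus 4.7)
The plan is to deduce Theorem \ref{BB decomp of M} directly from the framework assembled in Section \ref{Decomposition of a variety determined by an action of a torus}, with Proposition \ref{limit exists} supplying the crucial input that makes that framework applicable. I would proceed in three short steps.

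First, I would reduce from the $\widetilde{T}$-action to a $\C^{\ast}$-action. By Lemma \ref{one-parameter subgroup}, since $M(r,n)$ is quasi-projective (hence covered by finitely many affine $\widetilde{T}$-invariant opens) and $M(r,n)^{\widetilde{T}}$ is nonempty and finite by Theorem \ref{fixed points}, there exists a regular one-parameter subgroup $\lambda:\C^{\ast}\to\widetilde{T}$ for which $M(r,n)^{\lambda}=M(r,n)^{\widetilde{T}}$. I would then work with this induced $\C^{\ast}$-action throughout. The three hypotheses needed for the results of Section \ref{not complete} are now all in place: $M(r,n)$ is nonsingular and quasi-projective (recalled in Section \ref{generalities}), the fixed point set $M(r,n)^{\C^{\ast}}=\{p_{1},\dots,p_{s}\}$ is finite (Theorem \ref{fixed points}), and the limit $\lim_{t\to 0}t\cdot x$ exists for every $x\in M(r,n)$ by the just-proven Proposition \ref{limit exists}.

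Second, I would define, for each fixed point $p_{i}$, the set
$$M(r,n)_{i}^{+}=\{x\in M(r,n)\mid \lim_{t\to 0}t\cdot x = p_{i}\},$$
which gives a decomposition $M(r,n)=\bigsqcup_{i=1}^{s}M(r,n)_{i}^{+}$ into locally closed subvarieties by \cite[Theorem 4.1]{BB}, as recorded at the start of Section \ref{not complete}. To identify each cell with an affine space I would invoke the local Bia{\l}ynicki-Birula statement: since $M(r,n)$ is smooth and each fixed component is a single point, the analogue of Theorem \ref{BB finite}(2)--(3) in the non-complete smooth setting gives $M(r,n)_{i}^{+}\cong T_{p_{i}}M(r,n)^{+}$, the positive-weight subspace of the tangent space, which is an affine space. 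Concretely, this is the content of \cite[Theorem 4.1]{BB} applied near $p_{i}$, whose conclusions do not require completeness, only smoothness and the existence of the limit in a $\C^{\ast}$-invariant neighbourhood.

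Third, to obtain filtrability I would apply Theorem \ref{filtrable} verbatim: the hypotheses there are exactly quasi-projectivity, nonsingularity, finiteness of $M(r,n)^{\C^{\ast}}$, and existence of the limits $\lim_{t\to 0}t\cdot x$, all verified above. The theorem produces a decreasing chain of closed subvarieties $M(r,n)=X_{1}\supset X_{2}\supset\cdots\supset X_{s}\supset X_{s+1}=\emptyset$ with $M(r,n)_{i}^{+}=X_{i}\setminus X_{i+1}$, which is precisely the filtrability condition of Definition \ref{def fitrable}. The only potential obstacle in this program would have been verifying that limits exist for a variety which is not complete, and that has already been handled in Proposition \ref{limit exists} via the equivariance of the projective morphism $\pi$ and the fact that $M_{0}(r,n)$ contracts to $n[0]$; once that is in hand, the present theorem is a formal consequence of the results assembled in Sections \ref{generalities} and \ref{Decomposition of a variety determined by an action of a torus}.
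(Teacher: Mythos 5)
Your proposal is correct and follows essentially the same route as the paper: Proposition \ref{limit exists} supplies the existence of limits so that the orbit morphisms extend over $0$, Theorem \ref{BB finite} (via \cite[Theorem 4.1]{BB}) identifies the cells with affine spaces, and Theorem \ref{filtrable} gives filtrability from quasi-projectivity. You spell out the reduction to a regular one-parameter subgroup and the tangent-space identification of the cells in more detail than the paper's proof does, but the argument is the same.
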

\proof
By Proposition \ref{limit exists}, the limit $\lim_{t \to 0} t\cdot x$ exists and is a fixed point. It follows that the orbit morphism
\begin{align*}
\phi_x \colon \hspace{0.3cm} \C^* &\to M(r,n) \\
t \hspace{.15cm} &\mapsto
   t \cdot x
\end{align*}
extends to
\begin{align*}
\phi'_x \colon \hspace{0.3cm} \C &\to M(r,n) \\
t \hspace{.15cm} &\mapsto
   t \cdot x,
\end{align*}
where $\phi'_x(0):= \lim_{t \to 0} t\cdot x.$
Hence $M(r,n)$ admits a Bia{\l}ynicki-Birula decomposition into affine spaces by Theorem \ref{BB finite}.
This decomposition is filtrable by Theorem \ref{filtrable} since $M(r,n)$ is quasi-projective.
\endproof

Let $M(r,n)^{\C^*}=\{x_i \mid i=1,\dots, m \}$ and denote the cells of the decomposition by $$ M^+_i:=M(r,n)^+_i=\{x\in M(r,n) \mid \lim_{t \to 0} t\cdot x=x_i \}.$$

The limit $\lim_{t \to \infty} t\cdot x$ does not exist for all $x \in M(r,n).$ We will only consider the points of $M(r,n)$ such that the limit exists. These points define the subspaces $$M^-_i:=M(r,n)^-_i=\{x\in M(r,n) \mid \lim_{t \to \infty} t\cdot x \text{ exists and equals } x_i \}.$$

\begin{theorem} \label{union of minus cells}
The subvariety $\p$ is the union $\p=\bigcup_i M_i^-.$ Moreover, this is a filtrable decomposition of $\p$ into affine spaces.
\end{theorem}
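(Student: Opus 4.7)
The plan is to split the claim into three parts: the set-theoretic equality $\p=\bigcup_i M_i^-$, the affine-space structure of the cells, and filtrability.

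First I would prove the inclusion $\bigcup_i M_i^- \subseteq \p$. Suppose $x \in M_i^-$, so $\lim_{t\to\infty} t\cdot x = x_i$ exists in $M(r,n)$. By $\widetilde T$-equivariance of $\pi$ and the separatedness of $M_0(r,n)$, the limit $\lim_{t\to\infty} t\cdot \pi(x) = \pi(x_i)$ exists in $M_0(r,n)$. The one-parameter subgroup fixed in Proposition \ref{limit exists} was chosen so that, in the ADHM coordinates \eqref{ADHM for M0}, every orbit on $M_0(r,n)$ contracts to $(0,0,0,0) = n[0]$ as $t\to 0$; equivalently, all weights are strictly positive. For such a linear $\C^*$-action, the limit as $t\to\infty$ exists only at the origin. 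Hence $\pi(x)=n[0]$, i.e.\ $x\in\p$.

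For the reverse inclusion $\p \subseteq \bigcup_i M_i^-$, note that $\p$ is $\C^*$-invariant because $\pi$ is equivariant and $n[0]$ is fixed, and that $\p$ is projective by Theorem \ref{p irred proj}. Thus for every $x \in \p$ the orbit morphism $\phi_x:\C^*\to\p$ extends to $\overline\phi_x:\mathbb P^1\to\p$, so both $\lim_{t\to 0} t\cdot x$ and $\lim_{t\to\infty} t\cdot x$ exist and are fixed points of $\p$. The remark following Theorem \ref{fixed points} tells us that every fixed point of $M(r,n)$ already lies in $\p$, so the $\C^*$-fixed locus of $\p$ is precisely $\{x_1,\dots,x_m\}$. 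It follows that $x \in M_i^-$ for some $i$, which gives the desired equality $\p = \bigcup_i M_i^-$.

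Next, I would check that each cell is an affine space. Since $M(r,n)$ is non-singular, each fixed point $x_i$ is a smooth point and Theorem \ref{BB finite}, together with the local linearization of the $\C^*$-action around $x_i$, identifies $M_i^-$ with the negative weight subspace of $T_{x_i}M(r,n)$, which is an affine space. Because $M_i^- \subset \p$ by the previous step, these are exactly the minus-cells of the induced $\C^*$-action on $\p$.

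Finally, filtrability follows by exactly the argument of Theorem \ref{filtrable} applied to $\p$: by \cite[Theorem 1]{sumi}, $\p$ admits an equivariant projective embedding $\p \hookrightarrow \mathbb P^l$, and intersecting $\p$ with the filtration of $\mathbb P^l$ furnished by Remark \ref{rem}(3) yields a decreasing chain of closed subvarieties of $\p$ whose successive differences are exactly the cells $M_i^-$. The only delicate point in this whole plan is the first one, namely verifying that on $M_0(r,n)$ the limit at $\infty$ exists only at the unique fixed point; once the explicit weights of the regular one-parameter subgroup chosen in Proposition \ref{limit exists} are unpacked, this becomes immediate and the remainder of the proof is a direct application of the general results of Section \ref{Decomposition of a variety determined by an action of a torus}.
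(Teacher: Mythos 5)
Your proposal is correct and follows essentially the same route as the paper: both rest on the fact that on $M_0(r,n)$ the limit as $t\to\infty$ exists only at $n[0]$ (which the paper cites from \cite[Theorem 3.5(3)]{NY} and you rederive from the positivity of the ADHM weights), combined with properness of $\pi$ and projectivity of $\p$ for the set-theoretic equality, the local Bia{\l}ynicki-Birula structure at the smooth fixed points of $M(r,n)$ for the affine cells, and an equivariant projective embedding for filtrability. The only cosmetic difference is that the paper obtains that embedding by composing $\p\hookrightarrow M(r,n)\hookrightarrow \mathbb P^l$ rather than applying Sumihiro's theorem to $\p$ directly, which sidesteps any normality concerns for the punctual quot scheme.
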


\proof
The proof of the first claim can be found in \cite[Theorem 3.5(3)]{NY}. In fact, for any $x\neq n[0]$ in $M_0(r,n)$ the limit $\lim_{t\to \infty}t \cdot x$ does not exist. Hence by the projective morphism \eqref{projective morphism} we deduce $\p=\bigcup_i M_i^-.$ To show this is a filtrable decomposition we apply the proof of Theorem \ref{filtrable} to the equivariant embedding $\p \hookrightarrow \mathbb P^l$ obtained by composing the equivariant embeddings $\p \hookrightarrow M(r,n)$ and $M(r,n)\hookrightarrow \mathbb P^l.$ Finally, regarding the $M^-_i$'s as subspaces of $M(r,n)$ and using \cite[Theorem 4.1(b)]{BB}, we conclude they are isomorphic to affine spaces.
\endproof

\begin{remark}
The subvariety $\p$ being isomorphic to the punctual quot scheme is not smooth. Still the Bia{\l}ynicki-Birula decompositions for $\p$ exist.
From \cite[Theorem 1]{sumi} together with \cite[Lemma 8]{sumi} it follows that for any $x\in M(r,n)$ there exists an equivariant embedding of some neighborhood of $x$ into $\mathbb P^l.$ Hence for any $x\in \p$ there exists an equivariant embedding of some neighborhood of $x$ into $\mathbb P^l$ by composition since $\p$ has an equivariant embedding into $M(r,n).$ Thus the results of \cite[\S 1]{konarski} hold for $\p.$
\end{remark}

\section{Topological properties}\label{more topology}
Let $J=\{1,2,\dots, m\}$ be the set of indices so that the fixed points set $X^{\C^*}=\{x_j \mid j\in J\}$ yield the following filtrations.
\begin{equation} \label{filtration of M}
M(r,n)=M_1\supset M_2 \supset \cdots \supset M_{m} \supset M_{m+1}=\emptyset,
\end{equation}
\begin{equation} \label{filtration of p}
\emptyset=\pi_1\subset \pi_2 \subset \cdots \subset \pi_{m} \subset \pi_{m+1}=\p,
\end{equation}
where $M_{i}\setminus M_{i+1}=M^+_i,$ $\pi_{i+1}\setminus \pi_{i}=M^-_i$ and the decomposition of $M(r,n)$ (resp. $\p$) is given by $M(r,n)= \bigcup_{j\in J} M^+_j$ (resp. $\p= \bigcup_{j\in J} M^-_j$).

Define the subsets $M^+_{\leq j}:= \bigcup_{i\leq j}M^+_i$ and $M^-_{\leq j}:= \bigcup_{i\leq j}M^-_i.$ Then the following holds.
\begin{lemma}
For each $j,$ there is an inclusion $M^-_{\leq j} \hookrightarrow M^+_{\leq j}.$
\end{lemma}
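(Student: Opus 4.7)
The plan is to rewrite both sides using the filtrations \eqref{filtration of M} and \eqref{filtration of p}, and then to apply Proposition \ref{limit exists} together with the observation that a closed, $\C^*$-invariant subvariety of $M(r,n)$ is stable under taking plus-limits.

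First, telescoping the disjoint unions coming from the two filtrations yields the identifications
\[
M^-_{\leq j}=\pi_{j+1}, \qquad M^+_{\leq j}=M(r,n)\setminus M_{j+1}.
\]
Since $\pi$ in \eqref{projective morphism} is proper, $\p$ is closed in $M(r,n)$; combined with the fact that $\pi_{j+1}$ is closed in $\p$ by definition of the filtration, this shows $\pi_{j+1}$ is a closed subvariety of $M(r,n)$. It is moreover $\C^*$-invariant, being the disjoint union of the invariant minus-cells $M^-_i$ for $i\leq j$.

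Next, I would take an arbitrary $x \in \pi_{j+1}$ and set $y := \lim_{t\to 0} t \cdot x$, which exists in $M(r,n)$ by Proposition \ref{limit exists}. The invariance of $\pi_{j+1}$ forces the whole orbit $\C^* \cdot x$ to lie in $\pi_{j+1}$, and the closedness of $\pi_{j+1}$ then forces $y \in \pi_{j+1}$ as well. Since $y$ is a $\C^*$-fixed point of $M(r,n)$, it is one of the $x_i$. To finish I would identify the fixed points actually contained in $\pi_{j+1} = \bigsqcup_{i\leq j} M^-_i$: by definition $x_i$ is the unique fixed point of $M^-_i$, and since the cells are pairwise disjoint the fixed points in $\pi_{j+1}$ are exactly $\{x_1,\dots,x_j\}$. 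Hence $y = x_k$ for some $k\leq j$, and therefore $x \in M^+_k \subseteq M^+_{\leq j}$.

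The only potential obstacle is bookkeeping: the filtrations \eqref{filtration of M} and \eqref{filtration of p} are indexed in opposite directions, so one must check that the shared indexing $J = \{1,\dots,m\}$ really does assign $x_i$ as the unique fixed point of both $M^+_i$ and $M^-_i$. Once that compatibility is in place, the argument reduces to the closedness-plus-invariance principle applied to the single object $\pi_{j+1}$.
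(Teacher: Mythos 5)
Your argument is correct, but it takes a genuinely different route from the paper's. The paper works one cell at a time: it starts from the intersection statement $M^-_j\cap M^+_j=\{x_j\}$ for cells attached to the same fixed point, and deduces from the plus-filtration \eqref{filtration of M} that $M^-_j$ is disjoint from $M_{j+1}=\bigcup_{k\geq j+1}M^+_k$, hence $M^-_j\subset M^+_{\leq j}$; the entire weight of that proof rests on the intersection fact, whose justification is left implicit. You instead treat $\pi_{j+1}=M^-_{\leq j}$ as a single closed, $\C^*$-invariant subvariety of $M(r,n)$ (closed because $\p$ is proper over the point $n[0]$ and the minus-decomposition of $\p$ is filtrable, Theorem \ref{union of minus cells}), send each of its points to its plus-limit via Proposition \ref{limit exists}, and note that the only fixed points available inside $\pi_{j+1}=\bigsqcup_{i\leq j}M^-_i$ are $x_1,\dots,x_j$. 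This buys you two things: you avoid the unproved intersection fact entirely, and you never actually use the plus-filtration \eqref{filtration of M} --- only the definition of the cells $M^+_i$ and the closedness of the pieces of \eqref{filtration of p} (so your opening identification $M^+_{\leq j}=M(r,n)\setminus M_{j+1}$ is not even needed). The indexing compatibility you flag at the end is the one hypothesis both proofs share, and it is supplied by the paper's setup in Section \ref{more topology}: the same enumeration of $M(r,n)^{\C^*}$ is used for both filtrations, so $x_i$ is by definition the unique fixed point of both $M^+_i$ and $M^-_i$, which is all your argument requires.
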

\proof
We prove the assertion by using the filtration \eqref{filtration of M}. The intersection $M_j^- \cap M^+_j$ is the fixed point $\{x_j\},$ so $M_j^- \cap (M_j \setminus M_{j+1})=\{x_j\}.$ This means $M_j^- \cap M_{j+1}= \emptyset$ since  $x_j\in M_j,$  $x_j \notin M_{j+1}$ and $M_{j+1}\subset M_j.$ Note that $M_{j+1}=M^+_{\geq j+1}$ so $M_j^- \subset M^+_{\leq j}$ for all $j.$ Hence we get the inclusion $M^-_{\leq j} \subset M^+_{\leq j}$ for all $j.$
\endproof

\begin{theorem} \label{iso homology groups}
The inclusion $M^-_{\leq j} \hookrightarrow M^+_{\leq j}$ induces isomorphisms of homology groups with integer coefficients $H_k(M^-_{\leq j}) \stackrel{\cong}\longrightarrow H_k(M^+_{\leq j})$ for all $j$ and all $k.$ In particular the inclusion $\p \hookrightarrow M(r,n)$ induces isomorphisms $H_k(\p) \stackrel{\cong}\longrightarrow H_k(M(r,n))$ for all $k.$
\end{theorem}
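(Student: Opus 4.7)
My plan is to proceed by induction on $j \in \{0, 1, \dots, m\}$, showing that the inclusion $M^-_{\leq j} \hookrightarrow M^+_{\leq j}$ induces an isomorphism on singular homology in every degree. The base case $j = 0$ is trivial since both sides are empty. For the inductive step, I would apply the five lemma to the commutative ladder of long exact sequences of the pairs $(M^-_{\leq j}, M^-_{\leq j-1})$ and $(M^+_{\leq j}, M^+_{\leq j-1})$ induced by the inclusion. By the induction hypothesis, the maps $H_*(M^-_{\leq j-1}) \to H_*(M^+_{\leq j-1})$ are isomorphisms, so the problem reduces to showing the induced map on relative homology $H_k(M^-_{\leq j}, M^-_{\leq j-1}) \to H_k(M^+_{\leq j}, M^+_{\leq j-1})$ is an isomorphism for all $k$.

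Setting $d^{\pm}_j := \dim_{\C} M^{\pm}_j$, the decomposition of $T_{x_j}M(r,n)$ into positive- and negative-weight subspaces under the $\C^*$-action (with no zero weight, since $x_j$ is isolated) gives $d^+_j + d^-_j = 2nr$. For the minus side, filtrability together with \eqref{closure} yields $\overline{M^-_j} \setminus M^-_j \subset M^-_{\leq j-1}$, so $(M^-_{\leq j}, M^-_{\leq j-1})$ is a good pair whose quotient is the one-point compactification of the affine cell $M^-_j \cong \mathbb A^{d^-_j}$, namely $S^{2d^-_j}$; hence $H_k(M^-_{\leq j}, M^-_{\leq j-1}) = \Z$ concentrated in degree $2 d^-_j$. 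For the plus side, $M^+_{\leq j} = M(r,n) \setminus M_{j+1}$ is open in the smooth quasi-projective variety $M(r,n)$, hence smooth, and $M^+_j = M_j \cap M^+_{\leq j}$ is a smooth closed submanifold of complex codimension $d^-_j$; the Thom isomorphism applied to a tubular neighborhood then gives
$$ H_k(M^+_{\leq j}, M^+_{\leq j-1}) \cong H_{k - 2 d^-_j}(M^+_j) \cong \Z ~, $$
again concentrated in degree $2 d^-_j$.

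The remaining and most delicate step is verifying that the inclusion carries a generator of the one side to a generator of the other, so that the map is an isomorphism rather than zero. A generator on the minus side is represented by the compact cycle $\overline{M^-_j}$ modulo its boundary in $M^-_{\leq j-1}$, while a generator on the plus side corresponds under the Thom isomorphism to a small disk transverse to $M^+_j$. These agree up to sign because $M^-_j \cap M^+_j = \{x_j\}$, and the tangent spaces $T_{x_j}M^-_j$ and $T_{x_j}M^+_j$ are complementary as the $\pm$-weight subspaces of $T_{x_j}M(r,n)$, yielding a transverse intersection of multiplicity one at $x_j$. Thus the image of $[\overline{M^-_j}]$ represents the Thom class up to sign, the map on relative homology is $\pm \id_\Z$, and the induction closes. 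Taking $j = m$ with $M^+_{\leq m} = M(r,n)$ and $M^-_{\leq m} = \p$ yields the final statement.

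The principal obstacle I anticipate is precisely this last transversality/intersection-number verification: the abstract identification of both relative groups with $\Z$ is a formal consequence of good-pair excision and the Thom isomorphism, but checking that the inclusion realizes an isomorphism, rather than the zero map, requires the geometric input that the $\pm$-cells through a common fixed point meet transversally there and nowhere else.
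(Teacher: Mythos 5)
Your proposal follows essentially the same route as the paper: induction along the filtration, the five lemma applied to the ladder of long exact sequences of pairs, identification of the minus-side relative homology with the homology of the one-point compactification of the affine cell $M^-_j$, and of the plus-side relative homology with that of the Thom space of the normal bundle of $M^+_j$ via excision and the tubular neighborhood. The only divergence is in the final compatibility check: where you argue via transversality and intersection multiplicity one at $x_j$, the paper deformation retracts the normal bundle of $M^+_{j+1}$ onto its fibre at the fixed point and identifies that fibre with the minus cell using the weight decomposition $T_{x_{j+1}}M = T_{x_{j+1}}M^+_{j+1}\oplus T_{x_{j+1}}M^-_{j+1}$ --- the same geometric input, with your version being, if anything, more explicit about why the induced map is nonzero rather than merely why the two groups are abstractly isomorphic.
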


\proof
We prove the theorem by induction on $j.$

For $j=1$ the inclusion $M^-_1\hookrightarrow M^+_1$ induces an isomorphism of homology groups $H_k(M^-_1)\stackrel{\cong}{\longrightarrow} H_k(M^+_1)$ for all $k$ since $M^-_1$ and $M^+_1$ are isomorphic to affine spaces.

Suppose the inclusion $M^-_{\leq j}\hookrightarrow M^+_{\leq j}$ induces an isomorphism of homology groups $H_k(M^-_{\leq j})\stackrel{\cong}{\longrightarrow} H_k(M^+_{\leq j})$ for all $k$ and consider the homology long exact sequences of the pairs $(M^-_{\leq j+1},M^-_{\leq j})$ and $(M^+_{\leq j+1},M^+_{\leq j})$ respectively. We have the following diagram

\begin{center}
\begin{tikzpicture}
\matrix(m)[matrix of math nodes, row sep=2em, column sep=1.5em, text height=1.5ex, text depth=0.25ex]
{\cdots & H_{k+1}(M^-_{\leq j+1},M^-_{\leq j}) & H_k(M^-_{\leq j}) & H_k(M^-_{\leq j+1}) & \cdots \\
\cdots & H_{k+1}(M^+_{\leq j+1},M^+_{\leq j}) & H_k(M^+_{\leq j}) & H_k(M^+_{\leq j+1}) & \cdots \\};
\path[->,font=\scriptsize]
(m-1-1) edge (m-1-2)
(m-1-2) edge (m-1-3)
(m-1-2) edge (m-2-2)
(m-1-3) edge (m-1-4)
(m-1-3) edge node[auto] {$\cong$} (m-2-3)
(m-1-4) edge (m-1-5)
(m-1-4) edge (m-2-4)
(m-2-1) edge (m-2-2)
(m-2-2) edge (m-2-3)
(m-2-3) edge (m-2-4)
(m-2-4) edge (m-2-5);
\end{tikzpicture}
\end{center}
where the arrows are induced by inclusions. The middle arrow is an isomorphism by the induction hypothesis.

$M^-_{\leq j}$ is closed in $M^-_{\leq j+1}.$ This follows from \eqref{closure} by reversing the order of the inequality to meet that of the filtration \eqref{filtration of p}. From \cite[Proposition 2.22]{Hatcher} $$H_k(M^-_{\leq j+1},M^-_{\leq j}) \cong H_k(M^-_{\leq j+1}/M^-_{\leq j}, p),$$ where $p$ is the point at infinity. The quotient space $M^-_{\leq j+1}/M^-_{\leq j}$ is isomorphic to the one-point compactification of $M^-_{j+1}$ that is homeomorphic to the Thom space $T(M^-_{j+1})$ of $M^-_{j+1}$ \cite[Ex. 138]{Davis-Kirk}. It follows that $$H_k(M^-_{\leq j+1},M^-_{\leq j}) \cong H_k(T(M^-_{j+1}), p).$$
$M^-_{j+1}$ is isomorphic to an affine space so it is isomorphic to $N_{|{x_{j+1}}},$ the normal bundle to $x_{j+1}$ in $M^-_{j+1}.$ Hence
$$H_k(M^-_{\leq j+1},M^-_{\leq j}) \cong H_k(T(M^-_{j+1}), p)\cong H_k(T(N_{{|x_{j+1}}}), p).$$

Let us denote by $V(M^+_{j+1})$ the tubular neighborhood of $M^+_{j+1}$ in $M^+_{\leq j+1}.$ By excision we get the following isomorphism
$$H_k(M^+_{\leq j+1},M^+_{\leq j})= H_k(M^+_{\leq j}\cup V(M^+_{j+1}),M^+_{\leq j}) \cong H_k(V(M^+_{j+1}),\partial V(M^+_{j+1})),$$
where $\partial V(M^+_{j+1}) = V(M^+_{j+1})\setminus M^+_{j+1}.$ \\Denote by $N$ the normal bundle of $M^+_{j+1}.$ From the tubular neighborhood theorem $V(M^+_{j+1})$ is homeomorphic to $N$ and $M^+_{j+1}$ is homeomorphic to the zero section of $N,$ see \emph{e.g.} \cite{Bott-Tu}. Thus
$$H_k(V(M^+_{j+1}),\partial V(M^+_{j+1})) \cong H_k(N,N_0) \cong H_k(T(N),p),$$
where $N_0 \subset N$ is the complement of the zero section in $N$ and $T(N)$ is the Thom space of $N.$

Since the point $x_{j+1}$ is the deformation retract of $M^+_{j+1}$ then $N$ deformation retracts to $N_{x_{j+1}},$ the fibre of $N$ at $x_{j+1}.$ Moreover, $N_{x_{j+1}}$ is a deformation retract of $N_{{|x_{j+1}}}.$ It follows that $H_k(T(N_{{|x_{j+1}}}), p)\cong H_k(T(N_{x_{j+1}}), p)$ and hence $H_k(M^-_{\leq j+1},M^-_{\leq j})\cong H_k(M^+_{\leq j+1},M^+_{\leq j})$ for all $j$ and all $k.$ By the five lemma we conclude the proof.
\endproof

\begin{lemma}\label{M 1-connected}
$M(r,n)$ is simply connected.
\end{lemma}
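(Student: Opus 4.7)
The plan is to deduce simple connectedness from the filtrable Bia{\l}ynicki-Birula decomposition of Theorem \ref{BB decomp of M}, using the complement of the top cell as a codimension-$2$ obstruction that any loop can be pushed off. By Remark \ref{rem}(2) the filtration \eqref{filtration of M} has a unique top cell $M_1^+$ whose complement $M_2$ is closed in $M(r,n)$. Since $M(r,n)$ is smooth of pure dimension $2nr$ and is the disjoint union of the affine cells $M_i^+$, at least one cell must have complex dimension $2nr$; by uniqueness this cell is $M_1^+$, so $M_1^+ \cong \mathbb{A}^{2nr}$ is an open subscheme and every other cell has strictly smaller complex dimension. Because the lower-dimensional cells cover $M_2$, the latter is a proper closed complex subvariety of $M(r,n)$ of real codimension at least $2$. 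The same dimension count rules out any connected component of $M(r,n)$ lying entirely in $M_2$, so $M_1^+$ meets every component of $M(r,n)$; as $M_1^+$ is itself connected, $M(r,n)$ is connected.

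With this setup, the next step is a standard general-position argument to show that the inclusion $M_1^+ \hookrightarrow M(r,n)$ is surjective on $\pi_1$. Fix a basepoint $x_0 \in M_1^+$ and let $\gamma: S^1 \to M(r,n)$ be a continuous loop at $x_0$. Approximate $\gamma$ by a smooth loop (homotopic to it) and perturb it to be transverse to a Whitney stratification of $M_2$; since each stratum is a smooth locally closed submanifold of real codimension at least $2$, transversality of a map from the $1$-dimensional manifold $S^1$ forces empty intersection with the stratum. Carrying this out for every stratum, in order of decreasing dimension, yields a homotopic loop contained in $M_1^+$. Hence every element of $\pi_1(M(r,n), x_0)$ is represented by a loop in $M_1^+$, giving the claimed surjection $\pi_1(M_1^+) \twoheadrightarrow \pi_1(M(r,n))$. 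Since $\pi_1(M_1^+) = \pi_1(\mathbb{A}^{2nr}) = 0$, we conclude $\pi_1(M(r,n)) = 0$.

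The only subtle point is the transversality step, which has to accommodate a possibly singular $M_2$; the Whitney stratification reduces it to the smooth codimension-$2$ situation, where the codimension count is classical. Everything else is a formal consequence of the filtrable decomposition and the uniqueness of the top cell.
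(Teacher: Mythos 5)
Your proof is correct and follows essentially the same route as the paper: both identify the unique top cell $M^+_1$ of the filtrable decomposition as a Zariski-open subset isomorphic to an affine space and deduce $\pi_1(M(r,n))=0$ from the surjectivity of $\pi_1(M^+_1)\to\pi_1(M(r,n))$. The only difference is that the paper obtains that surjectivity by citing \cite[Theorem 12.1.5]{Cox}, whereas you reprove it by hand (connectedness plus a transversality argument pushing loops off the real-codimension-$\geq 2$ complement $M_2$), which is precisely the standard proof of the cited result.
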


\proof
There exists a unique cell  $M^+_1$ of maximal dimension that is open in $M(r,n)$ (see Remark \ref{rem}). Since $M^+_1 \subset M(r,n)$ is isomorphic to an affine space then $\pi_1(M^+_1)=0$.
By \cite[Theorem 12.1.5]{Cox} the inclusion $M^+_1 \hookrightarrow M(r,n)$ induces a surjective map $\pi_1(M^+_1)\rightarrow \pi_1(M(r,n)).$ Hence $\pi_1(M(r,n))=0.$
\endproof

\begin{remark} Note that $M(r,n)$ is irreducible since it is non-singular and connected.
\end{remark}

\begin{lemma}\label{p 1-connected}
$\p$ is simply connected.
\end{lemma}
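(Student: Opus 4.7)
The plan is to mimic the proof of Lemma \ref{M 1-connected}, with the filtrable plus-decomposition of $M(r,n)$ replaced by the filtrable minus-decomposition of $\p$ established in Theorem \ref{union of minus cells}.

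Concretely, by that theorem $\p$ is a disjoint union of affine cells $M^-_i$ arranged in the filtration \eqref{filtration of p}. The top cell $M^-_m = \p \setminus \pi_m$ is the complement of a closed subvariety and is therefore open in $\p$; by the minus-decomposition version of Remark \ref{rem}(2) it is the unique cell of maximal dimension, and it is dense in $\p$ because $\p$ is irreducible by Theorem \ref{p irred proj}.

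Since $M^-_m$ is isomorphic to an affine space we have $\pi_1(M^-_m) = 0$. As in the proof of Lemma \ref{M 1-connected}, the inclusion of a dense open subset of an irreducible complex algebraic variety induces a surjection on fundamental groups (the general fact invoked there through \cite[Theorem 12.1.5]{Cox}); applied to $M^-_m \hookrightarrow \p$ this yields $\pi_1(M^-_m) \twoheadrightarrow \pi_1(\p)$, hence $\pi_1(\p) = 0$. The only mild point to verify is that this surjectivity statement still applies even though $\p$ is singular (it is isomorphic to the punctual quot scheme by Theorem \ref{iso quot sch}); but no smoothness is required for the $\pi_1$-surjectivity from a dense open subset of an irreducible variety, so this poses no obstacle.
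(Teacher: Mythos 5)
The reduction to the top cell is set up correctly: $M^-_m$ is open and nonempty, hence dense in the irreducible $\p$, and it is simply connected because it is an affine space. But the step you dismiss as ``mild'' is exactly where the argument breaks. Surjectivity of $\pi_1(U)\to\pi_1(X)$ for a dense Zariski-open subset $U$ of an irreducible complex variety $X$ is \emph{not} true in general: it requires $X$ to be normal, or at least unibranch. The standard counterexample is the nodal cubic $C$: its smooth locus $U$ is isomorphic to $\C^*$, and the generator of $\pi_1(U)\cong\Z$ bounds a disc lying on one branch of the node, so it becomes null-homotopic in $C$, while $\pi_1(C)\cong\Z\neq 0$; hence $\pi_1(U)\to\pi_1(C)$ is the zero map. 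The underlying point is that a loop passing through a non-unibranch singular point cannot be homotoped off it. In the proof of Lemma \ref{M 1-connected} this issue does not arise because $M(r,n)$ is smooth, hence normal; but $\p$ is singular (it is the punctual quot scheme $\Q(r,n)$), and neither normality nor unibranchness of $\p$ is established in the paper. So the sentence ``no smoothness is required for the $\pi_1$-surjectivity from a dense open subset of an irreducible variety'' asserts precisely the false general statement, and the proof does not close.

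The paper's own proof takes a different route that sidesteps this: it invokes Carrell--Sommese \cite[Theorem 3(a)]{Car.Som}, which says that for a $\C^*$-action the inclusion of the big cell $M^-_m\hookrightarrow\p$ induces an \emph{isomorphism} on fundamental groups; combined with $\pi_1(M^-_m)=0$ this gives the claim with no appeal to the open-dense-subset surjectivity. To salvage your version you would need either to prove that $\p$ is normal (or unibranch) --- a nontrivial statement about the punctual quot scheme that the paper nowhere supplies --- or to replace the citation of \cite[Theorem 12.1.5]{Cox} by a statement adapted to the Bia{\l}ynicki-Birula decomposition, as the paper does.
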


\proof
Let $M^-_m \hookrightarrow \p$ be the inclusion of the cell of maximal dimension into $\p.$ By \cite[Theorem 3(a)]{Car.Som} the induced map  $\pi_1(M^-_m) \rightarrow \pi_1(\p)$ is an isomorphism. $\pi_1(M^-_m)=0$ since $M^-_m$ is isomorphic to an affine space. Hence $\p$ is simply connected.
\endproof

\begin{theorem}\label{homotopy equivalence}
$\p$ is homotopy equivalent to $M(r,n).$
\end{theorem}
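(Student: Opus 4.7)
The plan is to apply the homological form of Whitehead's theorem to the inclusion $i\colon \p \hookrightarrow M(r,n).$ All the ingredients are already in place: by Lemmas \ref{M 1-connected} and \ref{p 1-connected}, both $M(r,n)$ and $\p$ are simply connected, while by Theorem \ref{iso homology groups} the inclusion induces isomorphisms $H_k(\p)\xrightarrow{\cong} H_k(M(r,n))$ for every $k\geq 0.$

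In order to apply Whitehead's theorem, I also need both spaces to have the homotopy type of CW complexes. This is standard for complex algebraic varieties (which are triangulable), but it can also be read off directly from the work of Section \ref{decomp M and pi}: the filtrable Bia{\l}ynicki-Birula decompositions of Theorem \ref{BB decomp of M} and Theorem \ref{union of minus cells}, together with the filtrations \eqref{filtration of M} and \eqref{filtration of p}, exhibit $M(r,n)$ and $\p$ as increasing unions of closed subvarieties whose successive complements are affine spaces, which is exactly the data of a CW structure with only even-dimensional cells.

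With these ingredients, the inclusion $i$ is a homology equivalence between simply connected spaces of the homotopy type of CW complexes. The standard chain of implications then finishes the argument: replace $M(r,n)$ by the mapping cylinder of $i$ so that $i$ becomes a cofibration; the long exact sequence together with Theorem \ref{iso homology groups} gives $H_*(M(r,n),\p)=0,$ and the relative Hurewicz theorem---applicable because both pieces are simply connected---upgrades this vanishing to $\pi_*(M(r,n),\p)=0.$ Hence $i$ is a weak equivalence, and Whitehead's theorem promotes it to a genuine homotopy equivalence.

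The one point that deserves a little care is the CW type of $\p,$ which is singular as noted in the remark after Theorem \ref{union of minus cells}. This is not a serious obstacle, since the equivariant embedding $\p \hookrightarrow \mathbb P^l$ used in the proof of Theorem \ref{union of minus cells} realises $\p$ as a closed subvariety of projective space, and such subvarieties are triangulable; alternatively, the filtrable affine-cell decomposition of $\p$ itself furnishes the desired CW structure.
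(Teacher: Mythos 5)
Your proof is correct and follows essentially the same route as the paper: vanishing of $H_*(M(r,n),\p)$ from Theorem \ref{iso homology groups}, simple connectivity from Lemmas \ref{M 1-connected} and \ref{p 1-connected}, the relative Hurewicz theorem to get $\pi_*(M(r,n),\p)=0$, and Whitehead's theorem to conclude. Your explicit attention to the CW-homotopy-type hypothesis (via triangulability or the affine-cell filtrations) is a point the paper leaves implicit, but it does not change the argument.
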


\begin{proof}
The inclusion $\p \hookrightarrow M(r,n)$ induces morphisms of homotopy groups and we have the following diagram
\begin{center}
\hspace{-3.5mm}\begin{tikzpicture}
\matrix(m)[matrix of math nodes,
row sep=2em, column sep=1.5em,
text height=1.5ex, text depth=0.25ex]
{\cdots & \pi_{k+1}(M(r,n),\p) & \pi_k(\p) & \pi_k(M(r,n)) & \cdots\\
\cdots& H_{k+1}(M(r,n),\p) & H_k(\p) & H_k(M(r,n)) & \cdots\\};
\path[->,font=\scriptsize]
(m-1-1) edge (m-1-2)
(m-1-2) edge (m-1-3)
(m-1-2) edge (m-2-2)
(m-1-3) edge (m-1-4)
(m-1-3) edge (m-2-3)
(m-1-4) edge (m-1-5)
(m-1-4) edge (m-2-4)
(m-2-1) edge (m-2-2)
(m-2-2) edge (m-2-3)
(m-2-3) edge node[auto] {$\cong$} (m-2-4)
(m-2-4) edge (m-2-5);
\end{tikzpicture}
\end{center}
where the isomorphism is from Theorem \ref{iso homology groups} and $H_{k}(M(r,n),\p)=0$ for all $k.$

From Lemma \ref{M 1-connected} and Lemma \ref{p 1-connected}, both $M(r,n)$ and $\p$ are $1$-connected, then using Hurewicz's theorem we get
$$\pi_2(M(r,n))\cong H_2(M(r,n)),$$
$$\pi_2(\p)\cong H_2(\p).$$
This yields the following diagram
\begin{center}
\begin{tikzpicture}
\matrix(m)[matrix of math nodes,
row sep=2em, column sep=1.3em,
text height=1.5ex, text depth=0.25ex]
{\pi_2(\p) & \pi_2(M(r,n)) & \pi_2(M(r,n),\p) &0\\
H_2(\p) & H_2(M(r,n)) & 0 \\};
\path[->,font=\scriptsize]
(m-1-1) edge (m-1-2)
(m-1-2) edge (m-1-3)
(m-1-2) edge node[auto] {$\cong$} (m-2-2)
(m-1-3) edge (m-1-4)
(m-1-3) edge (m-2-3)
(m-1-1) edge node[auto] {$\cong$} (m-2-1)
(m-2-1) edge node[auto] {$\cong$} (m-2-2)
(m-2-2) edge (m-2-3);
\end{tikzpicture}
\end{center}
Hence we get $\pi_2(M(r,n))\cong \pi_2(\p)$ and $\pi_2(M(r,n),\p)=0;$ then the pair $(M(r,n),\p)$ is 2-connected. By the relative Hurewicz theorem we have $$\pi_3(M(r,n),\p)\cong H_3(M(r,n),\p)=0.$$ Iterating this process it follows that for every $k$,
$$\pi_k(M(r,n),\p)\cong H_k(M(r,n),\p)=0.$$
Hence the long exact sequence of homotopy groups reduces to
\begin{center}
\begin{tikzpicture}
\matrix(m)[matrix of math nodes,
row sep=2em, column sep=1.3em,
text height=1.5ex, text depth=0.25ex]
{\pi_k(\p) & \pi_k(M(r,n))\\};
\path[->,font=\scriptsize]
(m-1-1) edge node[auto] {$\cong$} (m-1-2);
\end{tikzpicture}
\end{center}
for all $k.$ Finally Whitehead's theorem (see \emph{e.g.} \cite[page 370]{Rotman}) concludes the proof.
\end{proof}

\section{Moduli on toric surfaces} \label{gener}

In this section, we provide a generalization to the study of the moduli space of framed sheaves on a nonsingular projective toric surface $S.$ We consider the framing sheaf to be supported on a big and nef divisor $D \subset S$. Furthermore we assume that there exists a projective morphism of toric surfaces $p:S \to \mathbb P^2$ of degree 1.

First we study the fixed point locus of the moduli space on any toric surface $X$. Then we restrict ourselves to the moduli space $M(S)$ on the toric surface $S$ and construct a projective morphism from $M(S)$ to $M_0(r,n),$ the moduli space of ideal instantons introduced in section \ref{generalities}. Using this projective morphism we define a compact subvariety $\widetilde{N}$ of $ M(S)$ and study some topological properties, namely singular homology and homotopy equivalence between $M(S)$ and $\widetilde{N}.$

\subsection{Torus action and fixed points}
In this section we will construct an action of a torus $T$ on the moduli space of framed torsion-free sheaves $M(X)$ on X. Here $X$ is a nonsingular projective toric surface. This moduli space is a quasi-projective variety as shown in \cite{BruzMark}. We will show that the action of $T$ gives rise to a finite set of fixed points $(M(X))^{\C^*}$ and $(M^{\mu ss}(X))^T.$

We denote by $M^{\mu ss}(X)$ the space of semistable framed sheaves on $X$ defined in \cite[Definition 4.5]{BruzMarkTikh}, and by $M^{\mu\text{-poly}}(X)$ the moduli space of polystable framed sheaves on $X.$

As shown in \cite{BruzMarkTikh}, there is a projective morphism $\gamma$ from $M(X)$ onto $M^{\mu ss}(X)$ given as follows
\begin{align*}
\gamma: \quad\quad M \:\: &\longrightarrow M^{\mu ss}(X)= \coprod_{k\geq 0} M^{\mu \text{-poly}}(r,\xi,c_2-k,\delta)\times \Sym^k(X\setminus D)\\
(\curly E, \alpha) &\longmapsto \bigg(\big(gr^\mu (\curly E,\alpha)\big)\dual \dual, \Supp\frac{(gr^\mu\curly E)\dual \dual}{gr^\mu \curly E}\bigg),
\end{align*}
where the support of a sheaf $\curly F$ counted with multiplicities is denoted by $\Supp\curly F.$

Note that the double dual of a $\mu$-semistable framed torsion free sheaf is a $\mu$-polystable framed locally free sheaf, and the support of $\big((gr^\mu\curly E)\dual\dual / gr^\mu \curly E\big)$ is a point in $\Sym^k(X\setminus D)$ where $k=c_2(gr^{\mu}\curly E)-c_2\big((gr^{\mu}\curly E)\dual\dual\big).$

Now assume that the action of the $2$-dimensional algebraic torus $T^2$ on $X$ gives rise to a finite set of isolated fixed points $X^{T^2}=\{x_1,\dots,x_n\},$ that the framing divisor $D$ is toric, i.e., stable under the action of $T^2,$ and let the framing sheaf $\curly F$ be locally free on $D.$ Suppose we have an action of an $r$-dimensional torus $T^r$ on $\curly F.$ Then this induces an action of an $(r+2)$-dimensional torus $T$ on $M(X)$ and on $M^{\mu ss}(X).$

Let us consider the action of $T^2 \cong \C^*\times \C^*$ on $X.$ Then for any element $(t_1,t_2)$ of $T^2$ one has an automorphism $h_{t_1,t_2}$ of $X.$
$$h_{t_1,t_2}:  X \longrightarrow X$$

The action of $T^2$ on the sheaf $\curly E$ is defined by by taking the inverse image via the automorphism $h$: $\curly E \mapsto \curly E'=(h_{t_1,t_2}^{-1})^*\curly E.$

To define the action on the framing $\alpha$ let us consider the torus $T^r \cong \C^*\times \cdots \times \C^*$ ($r$-times) that acts on the framing sheaf as follows. For an element $(f_1,\dots,f_r) \in T^r$ let $F_{f_1,\dots,f_r}$ be the isomorphism of $\curly F$

$$F_{f_1,\dots,f_r}: \curly F \longrightarrow \curly F$$

Then the action of $T \cong \C^*\times \cdots \times \C^*$ ($(r+2)$-times) on a pair $(\curly E,\alpha)\in M(X)$ is defined by
$$(t_1,t_2,f_1,\dots,f_r)\cdot (\curly E,\alpha):= \big((h_{t_1,t_2}^{-1})^* \curly E, \alpha'\big),$$ where $\alpha'$ is given by the composition of the following maps:
$$\curly E'|_{D}=(h_{t_1,t_2}^{-1})^* \curly E|_{D}\xrightarrow{(h_{t_1,t_2}^{-1})^*\alpha}(h_{t_1,t_2}^{-1})^* \curly F\longrightarrow \curly F\xrightarrow{F_{f_1,\dots, f_r}} \curly F,$$
where the middle arrow is given by the action of $T^2$ under which $\curly F$ is stable.

The $T$ action is defined in a similar way on $M^{\mu ss}(X),$ namely the torus action on $M^{\mu\text{-poly}}(X)$ is the same as the action on $M(X)$ and the action of $T^2$ on $\Sym^k (X\setminus D)$ is induced from that on $X$ since $D$ is $T^2$-invariant. It is possible to check that the map $\gamma$ is equivariant, i.e., $\gamma$ commutes with the torus action. Indeed for a framed shead $(\curly E,\alpha) \in M(X),$ the following two compositions agree:
\begin{multline*}
(\curly E,\alpha) \stackrel{\gamma}\longmapsto \bigg(\big(gr^\mu (\curly E,\alpha)\big)\dual \dual, \Supp\frac{(gr^\mu\curly E)\dual \dual}{gr^\mu \curly E}\bigg) \xmapsto{\text{torus action}} \\
\bigg(\big((h_{t_1,t_2}^{-1})^* (gr^\mu \curly E) \dual \dual,(gr^\mu \alpha)' \big), h_{t_1,t_2}\Supp\frac{(gr^\mu\curly E)\dual \dual}{gr^\mu \curly E}\bigg),
\end{multline*}

\begin{multline*}
(\curly E,\alpha) \xmapsto{\text{torus action}} \big((h_{t_1,t_2}^{-1})^* \curly E, \alpha'\big)\stackrel{\gamma}\longmapsto\\
\bigg(\big(gr^\mu \big((h_{t_1,t_2}^{-1})^* \curly E, \alpha'\big)\big)\dual \dual, \Supp\frac{\big(gr^\mu\big((h_{t_1,t_2}^{-1})^* \curly E\big)\big)\dual \dual}{gr^\mu\big((h_{t_1,t_2}^{-1})^* \curly E\big)}\bigg).
\end{multline*}

This is because the torus action is compatible with the Jordan-H\"{o}lder filtration and with the double dual. The torus action on the support of a sheaf gives the support of a sheaf acted on by the torus action.

\begin{lemma}\label{fixpoints}
The fixed points set $(M(X))^T$ consists of finitely many points. The same holds for $(M^{\mu ss}(X))^T.$
\end{lemma}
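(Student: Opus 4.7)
The plan is to show that any $T$-fixed framed sheaf admits an honest $T$-equivariant structure, and then to invoke the classification of equivariant torsion-free sheaves on a smooth toric surface to conclude that only finitely many such objects exist with prescribed numerical invariants.

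First I would argue that a framed sheaf $(\curly E,\alpha)$ has trivial automorphism group: any $\psi\in\operatorname{Aut}(\curly E)$ compatible with the framing must satisfy $\alpha\circ\psi|_D=\alpha$, which forces $\psi|_D=\id$, and combined with the simplicity inherited from the $\mu$-stability of framed sheaves, this forces $\psi=\id$. Consequently, if the isomorphism class $[(\curly E,\alpha)]$ is $T$-fixed, then for each $t\in T$ there is a \emph{unique} comparison isomorphism $\varphi_t:(\curly E,\alpha)\to t\cdot(\curly E,\alpha)$. Uniqueness forces the cocycle identity $\varphi_{ts}=(t\cdot\varphi_s)\circ\varphi_t$, and the $\varphi_t$'s thus assemble into a genuine $T$-linearization of $\curly E$ compatible with $\alpha$. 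Hence $T$-fixed points of $M(X)$ are in bijection with isomorphism classes of $T$-equivariant framed torsion-free sheaves with the given numerical data $(r,\xi,c_2,\delta)$.

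Next, since $X$ is a smooth toric surface, it is covered by the finitely many $T^2$-invariant affine charts $U_i\cong\mathbb A^2$ centered at the torus-fixed points $x_1,\dots,x_n$. A $T^2$-equivariant torsion-free sheaf on $U_i$ of rank $r$ and bounded second Chern class is classified by Klyachko's theorem in terms of a finite combinatorial datum (a family of weighted filtrations of $\C^r$). Together with the requirement that the restriction to the toric divisor $D$ match the equivariant framing determined by $\curly F$, and with the cocycle conditions on double intersections, this leaves only finitely many isomorphism classes of equivariant framed sheaves with the prescribed invariants. Therefore $(M(X))^T$ is finite.

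For the second assertion I would use the $T$-equivariance of $\gamma$. The target decomposes as
\[
M^{\mu ss}(X)=\coprod_{k\geq 0}M^{\mu\text{-poly}}(r,\xi,c_2-k,\delta)\times\Sym^k(X\setminus D),
\]
and this decomposition is $T$-stable. Since $(X\setminus D)^{T^2}\subset X^{T^2}=\{x_1,\dots,x_n\}$ is finite, $(\Sym^k(X\setminus D))^T$ is finite as well (formal sums of fixed points with multiplicities summing to $k$). A $\mu$-polystable framed sheaf is a direct sum of $\mu$-stable framed sheaves, so applying the equivariant-lifting argument of the first paragraph to each stable summand, together with the Klyachko analysis, shows that $(M^{\mu\text{-poly}}(r,\xi,c_2-k,\delta))^T$ is finite for every $k$. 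Only finitely many $k$ contribute because $c_2-k\geq 0$, so the total $T$-fixed locus is finite.

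The main obstacle is the equivariant-lifting step of the first paragraph: one must verify, within the conventions of Bruzzo--Markushevich--Tikhomirov, that the framing rigidification is strong enough to make $\varphi_t$ unique, so that the cocycle condition is automatic. Once this rigidity is secured, the finiteness statements reduce to the well-known boundedness and combinatorial classification of equivariant torsion-free sheaves on a smooth toric surface with fixed numerical invariants.
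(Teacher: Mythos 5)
Your equivariant-lifting step is fine and is, implicitly, also the starting point of the paper's argument: since framed sheaves admit no nontrivial automorphisms, a $T$-fixed point carries a canonical $T$-linearization. The genuine gap is in the finiteness step. The Klyachko-type classification of $T^2$-equivariant torsion-free sheaves of rank $r$ on the invariant charts is \emph{not} a finite combinatorial datum once $r\geq 2$: the filtrations consist of actual subspaces of $\C^r$, which vary in Grassmannians, so equivariant sheaves with fixed numerical invariants form positive-dimensional families in general. Concretely, on $\proj$ the $T^2$-fixed locus of $M(2,1)$ already contains a $\mathbb P^1$, namely the kernels of the $T^2$-equivariant surjections $\Oo^{\oplus 2}\twoheadrightarrow \C_0$ onto the skyscraper at the origin, one for each line in $\C^2$ (these are pairwise non-isomorphic as framed sheaves because the framing kills all automorphisms of $\Oo^{\oplus 2}$). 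So finiteness genuinely fails for the $2$-torus alone and cannot follow from boundedness of Klyachko data plus cocycle and framing-compatibility conditions. What rescues the statement --- and what the paper's proof uses as its very first sentence --- is equivariance under the $r$-dimensional framing torus $T^r$: since the weights of $T^r$ on $\curly F=\bigoplus_{i=1}^r\curly F_i$ are pairwise distinct, a $T$-equivariant framed sheaf must split as $(\curly E,\alpha)=\bigoplus_{i=1}^r(\curly E_i,\alpha_i)$ into rank-one pieces $\curly E_i=\curly I_{Z_i}\otimes\Oo(C_i)$ framed to the $\curly F_i$. Only after this reduction to rank one does the classification become discrete (monomial ideals of bounded colength are finite in number, and $\Pic(X)$ is finitely generated), which is exactly how the paper concludes. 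Your proposal never states or establishes this splitting; the phrase ``the requirement that the restriction to $D$ match the equivariant framing \dots\ leaves only finitely many isomorphism classes'' is precisely where the entire content of the lemma is hidden.

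A secondary point: for $M^{\mu\text{-poly}}$ the comparison isomorphism $\varphi_t$ is no longer unique, since a polystable direct sum has extra automorphisms, so the cocycle identity is not automatic there. This is harmless once one has the rank-one splitting (the paper simply observes that the fixed locally free fixed points are $\bigoplus_i\Oo_X(C_i)$ and that $\Sym^k(X\setminus D)^T$ is finite, as you also note), but it means the rigidity mechanism you flag as the main obstacle is not in fact available on the polystable locus, while the real obstacle lies in the splitting step above.
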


\proof
A framed sheaf $(\curly E,\alpha)$ is fixed if it can be written in the form $(\curly E,\alpha)=(\curly E_1,\alpha_1)\oplus\dots\oplus(\curly E_r,,\alpha_r)$ such that $\curly E_i=\curly I_i\otimes \curly O(C_i).$ Here $C_i$ is a $T^2$-invariant divisor that does not intersect $D,$ $\curly I_i$ is the ideal sheaf of a zero dimensional subscheme $Z_i$ in $X\setminus D,$ and $\alpha_i$ is an isomorphism $\curly E_i|_D \xrightarrow{\:\cong\:} \curly F_i,$ where $\curly F_i$ are rank one locally free subsheaves of $\curly F$ supported on $D$ such that the direct sum $\bigoplus_{i=1}^r \curly F_i=\curly F.$ Note that the sheaf $\curly F$ decomposes into such a direct sum since it is locally free on a toric divisor in $X$ that is a a smooth curve.\\
    The ideal sheaves $\curly I_i$ are fixed if they are generated by monomials in the homogeneous coordinate ring (Cox ring) of $X.$ These monomials are finite and hence the ideal sheaves $\curly I_i$ form a finite family. Moreover, the Picard group of a compact projective variety is generated by a finite number of divisors \cite[Corollary 2.5]{Oda}. Hence  $\curly E_i=\curly I_i\otimes \curly O(C_i)$ form a finite family. As a result the fixed points set of $M(X)$ is finite.

Now regarding $M^{\mu \text{-poly}}(r,\xi,c_2-k,\delta)$ as an open subset of $M(X)$ with the corresponding invariants, the set of its fixed points is finite. A framed sheaf $(\curly E,\alpha)\in M^{\mu \text{-poly}}(r,\xi,c_2-k,\delta)$ is fixed if is it given as above, hence $\curly E= \bigoplus_{i=1}^r \curly E_i = \bigoplus_{i=1}^r(\curly I_i\otimes \curly O_X(C_i)).$ Since $\curly E$ is locally free then so is $\curly I_i$ for each $i.$ We have $\curly I_i\cong \curly I_i \dual \dual \cong \curly O_X,$ hence $\curly E= \bigoplus_{i=1}^r\curly O_X(C_i).$

The fixed points set of $\Sym^k(X\setminus D)$ is finite since $X^T$ is. It follows that $(M^{\mu ss}(X))^T$ is finite.
\endproof

\begin{remark}
From Lemma \ref{fixpoints} it follows that $N:=\gamma^{-1} \big((M^{\mu ss}(X))^T\big)$ contains all the fixed points of $M(X)$ since $\gamma$ is equivariant. This fact will not be needed in what follows.
\end{remark}

\subsection{Constructing a projective morphism}

In this section we will restrict ourselves to a toric surface $S$ admiting a projective morphism of toric surfaces $p:S \to \mathbb P^2$ of degree 1. An example of these surfaces is the iterated toric blowup of $\mathbb P^2,$ i.e, the iterated blowup along a set of points that are fixed under the torus action. We will also assume that the framing sheaf is free $\curly F= \Or_D.$ Note that under the assumptions above the direct image of the framing sheaf on $D$ is isomorphic to the framing sheaf on $l_{\infty}.$

We will construct a projective morphism from the moduli space $M(S)$ of framed torsion-free sheaves on $S$ to $M_0(r,n),$ the moduli space defined in section \ref{generalities}. To this end we will follow the construction in \cite[Appendix F]{NY}.

Next we will show that there exists a morphism between $M(S)$ and $M_0(r,n)$ and prove it is projective.

Let $\mathcal M(r,c_1,n)$ be the moduli space of $H$-stable sheaves $\curly E$ on $\mathbb P^2$ with rank $r:=\rk \curly E,$ first Chern class $c_1:=c_1(\curly E)$ and discriminant $n:= c_2(\curly E)-\frac{r-1}{2r} c_1(\curly E)^2.$ Assuming that the degree and the rank of a stable sheaf are coprime 
the moduli space consists of $\mu$-stable sheaves. We define $\mathcal M_{loc}(r,c_1,n)$ the subscheme of $\mathcal M(r,c_1,n)$ consisting of $\mu$-stable locally free sheaves.

Let $\widetilde{\mathcal M}(r,p^*c_1+kC,n)$ be the moduli space of $(H-\epsilon C)$-stable sheaves $\curly E$ on $S$ of rank $r,$ first Chern class $c_1(\curly E)=p^*c_1+kC,$ and discriminant $n:= c_2(\curly E)-\frac{r-1}{2r} c_1(\curly E)^2.$ Here $C$ is a (reducible) divisor on $S$ that does not intersect $D,$ $k$ is an integer and $\epsilon $ is sufficiently small.

\subsubsection{Uhlenbeck compactification of $\mathcal M_{loc}(r,c_1,n)$}

We define the Uhlenbeck compactification of the the moduli space of locally-free sheaves $\mathcal M_0(r,c_1,n)$ as follows.
$$ \mathcal M_0(r,c_1,n):=\bigsqcup_l \mathcal M_{loc}(r,c_1,n-l)\times \Sym^l(\mathbb P^2).$$

Li proved the following theorem in the general setting of moduli spaces on projective surfaces, see \cite{li, li-jun}. Here is Li's theorem for our moduli spaces.
\begin{theorem}
\begin{enumerate}
\item $\mathcal M_0(r,c_1,n)$ is a projective scheme.
\item There is a projective morphism
\begin{align}\label{projective morphism Li}
\pi\colon \hspace{0.3cm} \mathcal M(r,c_1,n) &\longrightarrow \mathcal M_0(r,c_1,n) \\ \notag
\curly E \hspace{.15cm} &\longmapsto
   (\curly E{\dual\dual}, \Supp(\curly E{\dual\dual}/\curly E)).
\end{align}
\end{enumerate}
\end{theorem}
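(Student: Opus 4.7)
The plan is to follow Jun Li's original approach via determinant line bundles. Under the coprimality assumption on rank and first Chern class, the Gieseker--Maruyama moduli space $\mathcal M(r,c_1,n)$ is itself a projective scheme, so both claims will be established simultaneously: I would construct the morphism $\pi$ via a suitable determinant line bundle and define $\mathcal M_0(r,c_1,n)$ to be its scheme-theoretic image, giving it a projective structure for free.

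The key input is Donaldson's $\mu$-map. Fix a class $u \in K(\mathbb P^2)$ of rank $0$ whose numerical invariants are orthogonal to the moduli problem (for example $c_1(u)\cdot c_1 = 0$ and $\chi(u) = 0$), and form the determinant line bundle $\mathcal L := \lambda_{\mathcal E}(u)$ on $\mathcal M(r,c_1,n)$ using the universal sheaf $\mathcal E$. The central technical fact, which is the heart of Li's argument in \cite{li, li-jun}, is that for a good choice of $u$ and for $N$ sufficiently large the line bundle $\mathcal L^{\otimes N}$ is globally generated, defining a morphism
$$\Phi \colon \mathcal M(r,c_1,n) \longrightarrow \mathbb P\bigl(H^0(\mathcal L^{\otimes N})^{\vee}\bigr),$$
whose fibers are exactly the Uhlenbeck equivalence classes: $\Phi(\curly E) = \Phi(\curly E')$ if and only if $\curly E{\dual\dual} \cong \curly E'{\dual\dual}$ and $\Supp(\curly E{\dual\dual}/\curly E) = \Supp(\curly E'{\dual\dual}/\curly E')$ as $0$-cycles. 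Taking $\mathcal M_0(r,c_1,n)$ to be the scheme-theoretic image of $\Phi$ endows the set $\bigsqcup_l \mathcal M_{loc}(r,c_1,n-l)\times \Sym^l(\mathbb P^2)$ with a projective scheme structure compatible with its stratification, and $\pi$ is then automatically a projective morphism between projective schemes.

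The main obstacle is the fiberwise analysis of $\Phi$. The \emph{easy} direction --- that $\Phi$ descends through the set-theoretic Uhlenbeck map --- follows from additivity of the $\mu$-class under the exact sequence $0 \to \curly E \to \curly E{\dual\dual} \to \curly Q \to 0$, which forces the sections of $\mathcal L^{\otimes N}$ to depend only on $\curly E{\dual\dual}$ together with the underlying cycle of $\curly Q$. The \emph{hard} direction --- that $\Phi$ actually separates distinct Uhlenbeck classes --- requires producing enough sections of $\mathcal L^{\otimes N}$ to distinguish non-isomorphic locally free sheaves $\curly E{\dual\dual}$ and distinct support configurations of $\curly Q$ at the same time. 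This step is achieved by restricting to sufficiently general high-degree curves $C \subset \mathbb P^2$, together with Mehta--Ramanathan type restriction theorems and cohomological vanishing, reducing the separation problem to a moduli situation on $C$ where ampleness of the analogous determinant bundle is known. Once this analysis is in place, both statements of the theorem follow immediately from the general fact that the image of a projective scheme under a morphism to projective space is a projective scheme.
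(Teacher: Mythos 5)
The paper does not actually prove this statement: it is imported verbatim as Li's theorem, with a citation to \cite{li, li-jun} standing in for the argument. Your proposal, by contrast, reconstructs the substance of Li's proof --- the determinant line bundle $\lambda_{\mathcal E}(u)$ for a suitable rank-zero class $u$, semi-ampleness of $\mathcal L^{\otimes N}$, identification of the fibers of the resulting morphism with Uhlenbeck equivalence classes via restriction to general high-degree curves, and the definition of $\mathcal M_0(r,c_1,n)$ as the scheme-theoretic image --- and this is a faithful outline of how the result is actually established. Two points deserve more care than your sketch gives them. First, in general the fibers of $\Phi$ are $\mu$-S-equivalence classes, i.e.\ two sheaves are identified when the graded objects of Jordan--H\"older filtrations of their double duals agree and the singularity cycles coincide; your cleaner description in terms of $\curly E{\dual\dual}\cong\curly E'{\dual\dual}$ is only valid under the coprimality hypothesis the paper imposes (so that $\mu$-semistable implies $\mu$-stable), and you should say so explicitly since the theorem is stated by Li without that hypothesis. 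Second, declaring $\mathcal M_0(r,c_1,n)$ to be the scheme-theoretic image gives projectivity for free, but identifying its underlying point set with $\bigsqcup_l \mathcal M_{loc}(r,c_1,n-l)\times \Sym^l(\mathbb P^2)$ requires not only the separation statement you highlight but also surjectivity onto every stratum, i.e.\ that each pair consisting of a $\mu$-stable locally free sheaf of discriminant $n-l$ and a length-$l$ cycle is realized by some stable sheaf; this is handled by elementary modifications and should be recorded as a step. With those caveats your route is the standard (and essentially the only known) proof, and it supplies exactly the content the paper elides by citation.
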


When the first Chern class is zero, the morphism \eqref{projective morphism Li} reduces to the morphism \eqref{projective morphism} defined in section \ref{generalities}.
This morphism will allow us define a new morphism $\widetilde{\pi}$ from the moduli space of $\mu$-stable sheaves on $S$ to $\mathcal M_0(r,c_1,n).$

\subsubsection{Defining a morphism $\widetilde{\pi}$}
In this section we will assume that the first Chern class $c_1=k\cdot C$ and $0\leq k <r.$ For a sufficiently large $l$ such that  $l=k$ modulo $r,$ we define the following morphism
\begin{align*}
\beta: \quad \widetilde{\mathcal M}(r,k,\tilde{n}) &\longrightarrow \mathcal M(r,n)\\
\curly E \quad &\longmapsto p_*\curly E(-lC).
\end{align*}

The composition of $\beta$ with the morphism $\pi$ defined in \eqref{projective morphism Li} gives the following morphism
\begin{align}
\widetilde{\pi}\colon \hspace{0.3cm} \widetilde{\mathcal M}(r,k,\tilde{n}) &\to \mathcal M_0(r,n) \label{p for non framed} \\ \notag
\curly E \hspace{.15cm} &\mapsto
   \bigg((p_* \curly E(-lC)){\dual\dual}, \Supp \frac{(p_*\curly E(-lC)){\dual\dual}}{p_*\curly E(-lC)}\bigg).
\end{align}

In the next section we will show how the morphism \eqref{p for non framed} restricts to a morphism between moduli spaces of framed sheaves and show it is projective.

\begin{remark}
Note that the definition of the morphism $\widetilde{\pi}$ relies on Lemma \ref{lemma1}. Using this lemma together with the Grothendieck-Riemann-Roch theorem, one can compute $n$ in terms of $k$ and $\tilde{n}$. Moreover, by the same arguments  one can show that the morphism $\widetilde{\pi}$ does not depend on the choice of $l.$ This holds since we are assuming that $l$ is equal to $k$ modulo $r.$
\end{remark}

\subsubsection{$\widetilde{\pi}$ for framed sheaves}
Before getting to the definition of the morphism $\widetilde{\pi}$ for framed sheaves, we will need a few results.

\begin{lemma}[Lemma F.19 \cite{NY}]\label{lem}
Denote by $\delta_1$ the leading coefficient of the polynomial $\delta.$ Assume that $\delta_1 \ll 1,$ then the following holds
\begin{enumerate}
\item For a semistable framed sheaf $(\curly E, \alpha),$ $\curly E$ is torsion-free.
\item 
All torsion-free $\mu$-semistable sheaves are $\mu$ stable.
\end{enumerate}
\end{lemma}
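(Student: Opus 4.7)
The overall plan is to exploit that when $\delta_1$ is sufficiently small, the framed $\mu$-(semi)stability condition becomes only a small perturbation of ordinary slope (semi)stability for the underlying sheaf $\curly E$. I would first record, for every saturated framed subsheaf $(\curly E',\alpha')\subset(\curly E,\alpha)$ with $0<\rk\curly E'<\rk\curly E$, the framed slope inequality
\[
\mu(\curly E')-\frac{\epsilon(\curly E')\,\delta_1}{\rk \curly E'}\le \mu(\curly E)-\frac{\delta_1}{\rk \curly E},
\]
where $\epsilon(\curly E')\in\{0,1\}$ encodes whether the framing restricts nontrivially to $\curly E'|_D$, together with the analogous inequality for framed quotients.

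For part (1), my strategy is to suppose $\curly T\subset\curly E$ is a nonzero torsion subsheaf and play the framed semistability condition against the quotient $(\curly E,\alpha)\twoheadrightarrow(\curly E/\curly T,\bar\alpha)$ (slopes at rank zero are ill-defined, so one has to use full Hilbert polynomials here). Because $\curly F$ is locally free on $D$ and $\alpha$ is an isomorphism on $\curly E|_D$, the induced framing $\bar\alpha$ remains an isomorphism and in particular $\epsilon(\curly E/\curly T)=1$. The resulting framed Hilbert polynomial inequality then takes the form
\[
P(\curly T,n)\le \delta_1\cdot(\text{a fixed polynomial of strictly lower degree}),
\]
which for $\delta_1\ll1$ is impossible since a nonzero coherent sheaf has strictly positive leading coefficient in its Hilbert polynomial. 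Hence $\curly T=0$.

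For part (2), I would assume $\curly E$ is torsion-free and framed $\mu$-semistable but not framed $\mu$-stable; then there exists a proper saturated framed subsheaf $(\curly E',\alpha')$ realizing equality in the displayed slope inequality. Rearranging yields
\[
\mu(\curly E')-\mu(\curly E)=\delta_1\!\left(\frac{\epsilon(\curly E')}{\rk\curly E'}-\frac{1}{\rk \curly E}\right),
\]
whose right-hand side is $O(\delta_1)$, while the left-hand side is a rational number with denominator dividing $\rk\curly E\cdot\rk\curly E'$ and, by the coprimality of $\deg\curly E$ and $\rk\curly E$ (the hypothesis stated just above the lemma), cannot vanish for any proper $\curly E'$. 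Choosing $\delta_1$ strictly smaller than the infimum of $|\mu(\curly E')-\mu(\curly E)|$ over the bounded family of proper subsheaves with relevant rank invariants then rules equality out, so $\curly E$ must be framed $\mu$-stable.

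The hard part will be bookkeeping the dichotomy $\epsilon(\curly E')\in\{0,1\}$ uniformly across both parts, and in particular confirming for (1) that the descent of $\alpha$ to the torsion-free quotient $\curly E/\curly T$ genuinely yields $\epsilon=1$; secondarily, for (2) one must ensure that the family of candidate destabilizing subsheaves is bounded so that the infimum in the slope-gap argument is attained by a strictly positive number. Both technicalities should follow from standard boundedness results for semistable sheaves with fixed topological invariants.
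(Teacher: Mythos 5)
First, note that the paper itself offers no proof of this statement: it is imported verbatim as Lemma F.19 of \cite{NY} and used as a black box, so there is no internal argument to compare yours against. Judged on its own terms, your part (2) is essentially the standard argument and is sound; in fact both the coprimality hypothesis and the boundedness worry are unnecessary, since writing the equality case as $\rk(\curly E)\deg(\curly E')-\rk(\curly E')\deg(\curly E)=\big(\epsilon(\curly E')\rk(\curly E)-\rk(\curly E')\big)\,\delta_1$ exhibits an integer equal to a nonzero real number of absolute value at most $\rk(\curly E)\,\delta_1$, which is already a contradiction once $\delta_1<1/\rk(\curly E)$.

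Part (1), however, has a genuine gap. On a surface the stability polynomial $\delta$ has degree $\dim X-1=1$, so the inequality you extract, $P(\curly T)\le\delta$, yields a contradiction by comparing leading coefficients only when $\curly T$ has one-dimensional support (a positive integer versus $\delta_1<1$). If $\curly T$ is supported in dimension zero, then $P(\curly T)$ is a positive constant and $P(\curly T)(n)\le\delta_1 n+\delta_0$ holds for all large $n$ no matter how small $\delta_1>0$ is; your ``strictly positive leading coefficient'' argument gives nothing there. The missing step is to observe that a zero-dimensional subsheaf of $\curly E$ is automatically contained in $\ker\alpha$, because its image in $\curly F$ would be a zero-dimensional subsheaf of a sheaf that is pure of dimension one ($\curly F$ being locally free on the divisor $D$); for subsheaves of $\ker\alpha$ the semistability condition carries no $\delta$-term and reads $P(\curly T)\le 0$, which forces $\curly T=0$. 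Relatedly, your choice to test semistability against the quotient $\curly E/\curly T$ presupposes that $\alpha$ descends, i.e.\ that $\alpha|_{\curly T}=0$, which is part of what must be proved (and the lemma concerns semistable pairs, where $\alpha$ need not be an isomorphism along $D$); testing against the subobject $\curly T$ itself, with the $\epsilon\in\{0,1\}$ dichotomy, avoids this circularity.
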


This lemma yields Lemma F.20 in \cite{NY}. This works in our case too. Hence using Lemma \ref{lem} we have the following result.

\begin{lemma} Assume that $\delta_1 \ll 1.$ If $\epsilon >0$ depending on $\delta_1$ and $\ch(E)$ is sufficiently small, then $(\curly E, \alpha)$ is semistable with respect to $H-\epsilon C$ and $\delta$ if and only if $(p_*\curly E(-lC),\alpha)$ is semistable with respect to $H$ and $\delta.$
In particular, the moduli space of framed torsion-free sheaves on $S$ is contained in the moduli space of semistable pairs on $S.$ \end{lemma}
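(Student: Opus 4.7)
The plan is to adapt the proof of Lemma F.20 of \cite{NY} to the present toric situation. The invocation of Lemma \ref{lem} is the crucial first step: once $\delta_1 \ll 1$, a framed sheaf $(\curly E,\alpha)$ is $\delta$-semistable with respect to a polarization $L$ if and only if the underlying sheaf $\curly E$ is $\mu_L$-stable (hence in particular torsion-free). So the desired equivalence reduces to showing that $\curly E$ is $\mu_{H-\epsilon C}$-stable on $S$ if and only if $p_*\curly E(-lC)$ is $\mu_H$-stable on $\mathbb P^2$. The framing $\alpha$ itself is not an issue: since $p$ is an isomorphism in a neighbourhood of $D$ and $C\cap D=\emptyset$, $\alpha$ transports unchanged under the operation $p_*\bigl(-\otimes \Oo(-lC)\bigr)$.

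The core of the argument is a slope calculation. Because $p$ is birational of degree one and $C$ is contracted by $p$, the projection formula yields $C\cdot p^*H=0$ and $\rk p_*\curly E(-lC) = \rk \curly E = r$. Pushing forward $c_1$ via the projection formula gives
\[
\mu_H\bigl(p_*\curly E(-lC)\bigr) \;=\; \frac{c_1(\curly E)\cdot p^*H}{r}, \qquad \mu_{H-\epsilon C}(\curly E) \;=\; \frac{c_1(\curly E)\cdot p^*H}{r} \;-\; \frac{\epsilon\, c_1(\curly E)\cdot C}{r},
\]
so the two slopes differ only by a term of order $\epsilon$. The same identities hold for any saturated subsheaf $\curly E'\subset \curly E$ via $\curly F':=p_*\curly E'(-lC)\subset p_*\curly E(-lC)$, and conversely every saturated subsheaf $\curly F'\subset p_*\curly E(-lC)$ arises from its saturated preimage in $\curly E$. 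Under this bijection the slope inequalities defining $\mu$-stability differ only by an $O(\epsilon)$ correction.

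The main obstacle is the quantitative step: one must choose $\epsilon$ uniformly, depending only on $\delta_1$ and $\ch(\curly E)$ and not on any particular subsheaf. This is achieved by the standard boundedness argument for the family of saturated subsheaves of sheaves with bounded Chern character on a projective surface. The set of possible values of $c_1(\curly E')\cdot p^*H/\rk \curly E'$ forms a discrete subset of $\mathbb Q$, so only finitely many of these values lie in any compact window around $\mu_{p^*H}(\curly E)$; taking $\epsilon$ smaller than the minimum gap in this finite set ensures that an $O(\epsilon)$ perturbation can neither create nor destroy destabilizers. The ``in particular'' clause then follows from the ``if'' direction together with the fact that the framed sheaf $(p_*\curly E(-lC),\alpha)$ on $\mathbb P^2$ is automatically $\delta$-semistable, by the inclusion of $M(r,n)$ into the moduli of semistable pairs from \cite{NY}.
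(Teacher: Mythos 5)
Your proposal is correct and follows the same route the paper intends: the paper offers no argument of its own here beyond asserting that \cite[Lemma F.20]{NY} ``works in our case too,'' and your sketch is precisely the content of that lemma's proof --- reduce framed semistability to $\mu$-stability of the underlying sheaf via Lemma \ref{lem}, compare slopes under $p_*(-\otimes\Oo(-lC))$ using $C\cdot p^*H=0$ and the correspondence of saturated subsheaves, and choose $\epsilon$ uniformly by discreteness of slopes plus boundedness of potential destabilizers. In fact your write-up supplies more detail than the paper does.
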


This lemma states that $\beta$ sends semistable framed sheaves on $S$ to semistable framed sheaves on $\mathbb P^2.$ Hence it extends to a morphism between moduli spaces of semistable sheaves on $S$ and on $\mathbb P^2.$ Since these moduli spaces are projective, $\beta$ is a projective morphism. In other words, we have a projective morphism, that we denote $\beta$ for simplicity, from the moduli space of framed sheaves on $S$ to the moduli space of framed sheaves on $\mathbb P^2.$
\begin{align}
\beta: \quad M(S) &\longrightarrow M(r,n) \label{beta framed}\\ \notag
(\curly E, \alpha) &\longmapsto \big(p_*\curly E(-lC), \phi\big),
\end{align}
where the framing $\phi$ is given by
\begin{equation}\label{phi}
\phi: p_*\curly E(-lC)|{\linf} \xrightarrow{\: \cong \:} p_*\curly \Oo_{S}^{\oplus r}(-lC)|{\linf} \cong \Or_{\linf},
\end{equation}since $p_*\curly \Oo_{S}(-lC)$ is isomorphic to $\Oo_{\mathbb P^2}.$

On the other hand, the morphism $\pi$ defined in \eqref{projective morphism} is projective, so the composition $\pi \circ \beta$ is projective. Hence we have proved the following theorem.

\begin{theorem} \label{proj mor}
There is a projective morphism
\begin{align}
\widetilde{\pi}\colon \hspace{0.3cm} M(S) \: &\to M_0(r,n) \label{p for framed} \\ \notag
(\curly E,\alpha) \hspace{.15cm} &\mapsto
   \bigg(\big((p_* \curly E(-lC)){\dual\dual}, \phi\big) , \Supp \frac{(p_*\curly E(-lC)){\dual\dual}}{p_*\curly E(-lC)}\bigg),
\end{align}
where the framing $\phi$ is given by \eqref{phi}.
\end{theorem}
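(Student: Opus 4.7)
The plan is to exhibit $\widetilde\pi$ as the composition $\pi\circ\beta$ of two projective morphisms, where $\beta\colon M(S)\to M(r,n)$ is the morphism \eqref{beta framed} and $\pi\colon M(r,n)\to M_0(r,n)$ is the morphism \eqref{projective morphism}. Since the composition of projective morphisms is projective, this factorization is what will deliver the theorem, provided I can check that both factors are genuinely defined and projective, and that the resulting composite matches the explicit formula \eqref{p for framed}.

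First I would check that $\beta$ is well-defined as a morphism of moduli schemes. Because $p\colon S\to\proj$ has degree one and the divisor $C$ is chosen disjoint from $D$, the restriction of $p$ to a neighborhood of $\linf$ is an isomorphism onto a neighborhood of $\linf\subset\proj$. Hence $p_*\Oo_S(-lC)|_{\linf}\simeq\Oo_{\linf}$, which lets me transport the framing $\alpha$ on $D$ to the framing $\phi$ of \eqref{phi}. Using the projection formula and the congruence $l\equiv k\pmod r$ used in defining $\beta$, I would verify that $p_*\curly E(-lC)$ is a torsion-free sheaf of rank $r$ on $\proj$, locally free in a neighborhood of $\linf$, with second Chern class equal to the prescribed $n$, so that it truly lies in $M(r,n)$.

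Second, I would deduce projectivity of $\beta$ from the two preceding lemmas: Lemma \ref{lem} and the subsequent unlabeled lemma together assert that, for $\delta_1\ll 1$ and $\epsilon$ sufficiently small, $(H-\epsilon C)$-semistability of $(\curly E,\alpha)$ is equivalent to $H$-semistability of $(p_*\curly E(-lC),\phi)$. Therefore $\beta$ extends to a morphism between the two semistable moduli spaces, both of which are projective, and any morphism between projective schemes is itself projective. Restricting to the open subschemes of framed torsion-free sheaves then yields a projective morphism $\beta\colon M(S)\to M(r,n)$. Composing with the projective morphism $\pi$ recalled in Section \ref{generalities} and reading off the two definitions produces exactly the formula \eqref{p for framed}.

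The main obstacle I anticipate is the careful bookkeeping of the framings under double-dualization. One must confirm that the isomorphism $\phi$ of \eqref{phi} coincides with the framing that $\pi$ attaches to the double dual $(p_*\curly E(-lC))^{\dual\dual}$. This reduces to the fact that $p_*\curly E(-lC)$ is already locally free in a neighborhood of $\linf$, so it agrees there with its double dual and $\phi$ lifts uniquely; but this compatibility must be checked in families rather than pointwise, in order to conclude that $\widetilde\pi$ is a morphism of schemes and not merely a set-theoretic map between moduli spaces.
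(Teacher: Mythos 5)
Your proposal follows essentially the same route as the paper: it factors $\widetilde{\pi}$ as $\pi\circ\beta$, obtains projectivity of $\beta$ by extending it (via the semistability lemmas) to a morphism between the projective moduli spaces of semistable pairs, and then composes with the projective morphism $\pi$ of \eqref{projective morphism}. Your additional remark about verifying the compatibility of the framing $\phi$ with double-dualization in families is a point the paper glosses over, but it does not change the argument.
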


\begin{remark}
The map $\widetilde{\pi}$ is equivariant under the torus action since $\beta$ is.
\end{remark}

In the next section, using this morphism we will study some topological properties of the moduli space of framed torsion-free sheaves on $S.$

\subsection{Some topological properties}\label{decomp M and gamma}

In this section, we define the subvariety $\widetilde N$ of $M(S)$ that is the inverse image by $\widetilde \pi$ of the fixed point of $M_0(r,n).$ We show that both $M(S)$ and $\widetilde{N}$ admit a filtrable Bia{\l}ynicki-Birula decomposition. It is enough to consider the action of a regular one-parameter subgroup $\lambda \subset T$ as in Section \ref{decomp M and pi}. Therefore, we will consider the action of $\C^*$ on $M(S)$ and on $\widetilde{N}.$

Having established the projective morphism $\widetilde{\pi},$ we define the inverse image $$\widetilde{N}:=\widetilde{\pi}^{-1}(n[0])$$ of the fixed point $n[0]$ of $M_0(r,n).$ Then $\widetilde{N}$ is a compact subvariety of $M(S).$

\begin{proposition} \label{the limit exists}
For every element $x \in M(S),$ the limit $\lim_{t\rightarrow 0} t\cdot x$ exists and lies in $M(S)^{\C^*}.$
\end{proposition}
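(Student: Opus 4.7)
The plan is to mirror the proof of Proposition \ref{limit exists}, using the projective equivariant morphism $\widetilde{\pi}\colon M(S)\to M_0(r,n)$ constructed in Theorem \ref{proj mor} in place of $\pi$. The key observation is that we already know the limit exists downstairs in $M_0(r,n)$, and projectivity allows us to lift this to $M(S)$.

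First, I would fix a regular one-parameter subgroup $\lambda\subset T$ as supplied by Lemma \ref{one-parameter subgroup}, so that $M(S)^{\lambda}=M(S)^T$ and analogously for $M_0(r,n)$. Using the ADHM-type description \eqref{ADHM for M0} of $M_0(r,n)$, the induced $\C^*$-action acts on the linear data by rescaling $(B_1,B_2,i,j)$, and for an appropriately chosen $\lambda$ (the one used in the proof of Proposition \ref{limit exists}) the limit $\lim_{t\to 0} t\cdot y$ exists for every $y\in M_0(r,n)$ and equals $(0,0,0,0)$, which is identified with $n[0]\in \Sym^{n}\C^2$. By Theorem \ref{fixed points}(2), $n[0]$ is the unique $T$-fixed point of $M_0(r,n)$.

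Next, given any $x\in M(S)$, consider the orbit morphism $\phi_x\colon \C^*\to M(S)$, $t\mapsto t\cdot x$. Composing with $\widetilde{\pi}$ and using equivariance (see the remark following Theorem \ref{proj mor}), we have $\widetilde{\pi}\circ \phi_x = \phi_{\widetilde{\pi}(x)}$, and by the previous paragraph this extends to a morphism $\overline{\phi}_{\widetilde{\pi}(x)}\colon \mathbb A^1\to M_0(r,n)$ with $\overline{\phi}_{\widetilde{\pi}(x)}(0)=n[0]$. Since $\widetilde{\pi}$ is projective by Theorem \ref{proj mor}, and in particular proper, the valuative criterion of properness applied to the local ring of $\mathbb A^1$ at the origin provides a unique lift $\overline{\phi}_x\colon \mathbb A^1\to M(S)$ of $\overline{\phi}_{\widetilde{\pi}(x)}$ extending $\phi_x$. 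Setting $\lim_{t\to 0} t\cdot x := \overline{\phi}_x(0)$, this limit exists in $M(S)$ and lies in $\widetilde{\pi}^{-1}(n[0])=\widetilde{N}$.

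Finally, the limit point is necessarily $\C^*$-fixed: for any $s\in \C^*$, continuity of the action together with $\lim_{t\to 0} st = 0$ gives $s\cdot \overline{\phi}_x(0) = \lim_{t\to 0} s\cdot(t\cdot x) = \lim_{t\to 0}(st)\cdot x = \overline{\phi}_x(0)$. Thus $\overline{\phi}_x(0)\in M(S)^{\C^*}$, completing the argument. The only mildly subtle step is the use of projectivity of $\widetilde{\pi}$ to produce the lift, but since Theorem \ref{proj mor} has already been established this is purely formal; the rest of the argument is a verbatim translation of the proof of Proposition \ref{limit exists}.
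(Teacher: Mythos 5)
Your proposal is correct and follows essentially the same route as the paper: the paper's proof of Proposition \ref{the limit exists} simply says it "goes through the same lines as that of Proposition \ref{limit exists}", i.e.\ use the ADHM description to see the limit exists in $M_0(r,n)$ and equals the unique fixed point $n[0]$, then use projectivity (properness) of the equivariant map $\widetilde{\pi}$ to lift the extension of the orbit map to $M(S)$. Your explicit invocation of the valuative criterion and the continuity argument for fixedness of the limit only make precise what the paper leaves implicit.
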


\begin{proof}
The proof of this proposition goes through the same lines as that of Proposition \ref{limit exists}.
\end{proof}

\begin{theorem} \label{BB decomp of M(S)}
$M(S)$ admits a Bia{\l}ynicki-Birula plus-decomposition into affine spaces. Moreover, this decomposition is filtrable.
\end{theorem}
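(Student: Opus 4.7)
The plan is to mirror, almost verbatim, the proof of Theorem \ref{BB decomp of M}, substituting the ingredients now available in the toric setting. The three facts that drove the original argument were: (i) the existence of limits under the $\C^*$-action, (ii) finiteness of the fixed-point locus, and (iii) quasi-projectivity of the ambient moduli space. All three of these have been (or can be) established for $M(S)$, so the proof will be essentially a reassembly.

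First, I would invoke Proposition \ref{the limit exists} to guarantee that for every $x \in M(S)$ the limit $\lim_{t \to 0} t \cdot x$ exists and is a fixed point. This allows the orbit morphism $\phi_x \colon \C^* \to M(S)$, $t \mapsto t \cdot x$, to extend to a morphism $\phi'_x \colon \C \to M(S)$ by setting $\phi'_x(0) = \lim_{t \to 0} t \cdot x$, exactly as in the proof of Theorem \ref{BB decomp of M}. Combined with Lemma \ref{fixpoints}, which guarantees that $M(S)^T$ (and hence $M(S)^{\C^*}$ for a regular one-parameter subgroup $\lambda$ chosen via Lemma \ref{one-parameter subgroup}) is a finite set $\{x_1, \dots, x_N\}$, the hypotheses of Theorem \ref{BB finite} are satisfied on $M(S)$ in the sense of Section \ref{not complete}. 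We therefore obtain a Bia{\l}ynicki-Birula plus-decomposition
\[
M(S) \;=\; \bigcup_{i=1}^{N} M(S)^+_i, \qquad M(S)^+_i = \{x \in M(S) \mid \lim_{t \to 0} t\cdot x = x_i\},
\]
whose cells are locally closed, non-singular (since $M(S)$ is non-singular), and isomorphic to affine spaces by the third clause of Theorem \ref{BB finite}.

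To upgrade this decomposition to a filtrable one, I would apply Theorem \ref{filtrable} directly. Its only hypotheses are quasi-projectivity of the ambient variety, existence of limits, and finiteness of the fixed points. Quasi-projectivity of $M(S)$ is recorded in the excerpt as a result of Bruzzo--Markushevich, and the other two hypotheses have just been verified. Thus Sumihiro's equivariant embedding $M(S) \hookrightarrow \mathbb{P}^l$ (\cite[Theorem 1]{sumi}) allows us to intersect the filtration coming from the filtrable plus-decomposition of $\mathbb P^l$ with $M(S)$, producing the required decreasing chain of closed subvarieties whose successive differences are exactly the cells $M(S)^+_i$.

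There is essentially no obstacle here beyond bookkeeping: the nontrivial content has already been localized in Proposition \ref{the limit exists}, Lemma \ref{fixpoints}, and the quasi-projectivity statement. The only place that calls for care is to check that the regular one-parameter subgroup $\lambda$ chosen by Lemma \ref{one-parameter subgroup} really does reproduce the full $T$-fixed locus as its $\C^*$-fixed locus (so that we may pass from the $(r+2)$-dimensional torus action to the one-parameter action without losing finiteness), but this is exactly the content of Lemma \ref{one-parameter subgroup}, so it is immediate.
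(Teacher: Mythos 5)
Your proposal is correct and follows exactly the route the paper takes: the paper's own proof is a one-line reference to Proposition \ref{the limit exists} together with the argument of Theorem \ref{BB decomp of M}, and your write-up simply spells out that argument (limits exist, fixed points are finite via Lemma \ref{fixpoints} and Lemma \ref{one-parameter subgroup}, cells are affine by Theorem \ref{BB finite}, and filtrability follows from Theorem \ref{filtrable} using quasi-projectivity). No discrepancies to report.
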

\proof
The theorem follows by using Proposition \ref{the limit exists} and proceeding as in the proof of Theorem \ref{BB decomp of M}.
\endproof

\begin{corollary}
$M(S)$ is simply connected and irreducible.
\end{corollary}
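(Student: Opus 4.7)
The plan is to mirror verbatim the argument used to prove Lemma \ref{M 1-connected} together with the remark immediately following it, now applied to $M(S)$ via the filtrable plus-decomposition provided by Theorem \ref{BB decomp of M(S)}.

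First I would extract from Theorem \ref{BB decomp of M(S)} and part (2) of Remark \ref{rem} the existence of a unique cell of maximal dimension, call it $M(S)^+_1$, whose complement is closed in $M(S)$. Hence $M(S)^+_1$ is an open, dense subvariety of $M(S)$, and it is isomorphic to an affine space by the first assertion of Theorem \ref{BB decomp of M(S)}; in particular it is path-connected and satisfies $\pi_1(M(S)^+_1)=0$.

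Next I would invoke \cite[Theorem 12.1.5]{Cox} in exactly the same way as in the proof of Lemma \ref{M 1-connected}: the open inclusion of a dense subvariety into a normal (here, smooth) variety induces a surjection on fundamental groups. Applied to $M(S)^+_1\hookrightarrow M(S)$, this yields a surjection $\pi_1(M(S)^+_1)\twoheadrightarrow \pi_1(M(S))$, and the triviality of the source forces $\pi_1(M(S))=0$. This establishes the first half of the corollary.

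For irreducibility I would combine simple connectedness, which forces connectedness of the underlying topological space, with the nonsingularity of $M(S)$. A connected nonsingular variety is irreducible, so the conclusion follows. The smoothness of $M(S)$ under the standing hypotheses (framing on a big and nef toric divisor $D$, with $\curly F$ locally free on $D$) is the known smoothness result for moduli of framed sheaves on smooth surfaces as constructed in \cite{BruzMarkTikh}, coming from vanishing of the obstruction $\mathrm{Ext}^2$ for framed deformations. The only potential obstacle is pinning down a clean reference for this nonsingularity in the toric setting considered here; modulo that, the argument is a direct transcription of the $M(r,n)$ case, with Theorem \ref{BB decomp of M(S)} substituting for Theorem \ref{BB decomp of M}.
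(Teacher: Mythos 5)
Your proposal matches the paper's proof essentially verbatim: the paper establishes simple connectedness of $M(S)$ by repeating the argument of Lemma \ref{M 1-connected} (the unique open maximal cell of the filtrable plus-decomposition is an affine space, and its inclusion induces a surjection on $\pi_1$), and deduces irreducibility from nonsingularity plus connectedness exactly as you do. Your added remark about pinning down the nonsingularity of $M(S)$ is a reasonable point of care, but the paper simply takes this as known from the construction in \cite{BruzMark, BruzMarkTikh}.
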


\proof
The simple connectedness of $M(S)$ is shown using the same arguments as in the proof of Theorem \ref{M 1-connected}.

$M(S)$ is irreducible since it is nonsingular and connected.
\endproof

Now suppose $M(S)^{\C^*}=\{x_i \mid i=1,\dots, m \}$ and denote the cells of the decomposition by $$M(S)^+_i:=M(S)^+_i=\{x\in M(S) \mid \lim_{t \to 0} t\cdot x=x_i \}.$$

The limit $\lim_{t \to \infty} t\cdot x$ does not exist for all $x \in M(S).$ We will only consider the points of $M(S)$ such that the limit exists. These points define the subspaces $$M(S)^-_i:=M(S)^-_i=\{x\in M(S) \mid \lim_{t \to \infty} t\cdot x \text{ exists and equals } x_i \}.$$

\begin{theorem} \label{BB decomp of N}
The subvariety $\widetilde{N}$ is the union $\widetilde{N}=\bigcup_i M(S)_i^-.$ Moreover, this is a filtrable decomposition of $\widetilde{N}$ into affine spaces.
\end{theorem}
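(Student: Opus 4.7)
The plan is to mirror the proof of Theorem \ref{union of minus cells} in the new setting, using the projective equivariant morphism $\widetilde\pi\colon M(S)\to M_0(r,n)$ constructed in Theorem \ref{proj mor} in place of $\pi$. The strategy splits into three steps: identify $\widetilde N$ with the union of minus-cells, promote the resulting decomposition to a filtrable one by an equivariant projective embedding, and invoke Bia{\l}ynicki-Birula's structure theorem to conclude that the cells are affine spaces.

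First I would prove the set-theoretic equality $\widetilde N=\bigcup_i M(S)_i^-$. The key input is that $M_0(r,n)$ has a single torus fixed point $n[0]$ (Theorem \ref{fixed points}(2)) and, by the argument used in Proposition \ref{limit exists}, for every $y\in M_0(r,n)$ the limit $\lim_{t\to 0} t\cdot y$ equals $n[0]$, while for $y\ne n[0]$ the limit $\lim_{t\to \infty} t\cdot y$ fails to exist (this is exactly the property exploited in \cite[Theorem 3.5(3)]{NY}). Since $\widetilde\pi$ is equivariant and proper, a point $x\in M(S)$ satisfies $\lim_{t\to\infty}t\cdot x$ exists in $M(S)$ if and only if $\lim_{t\to\infty} t\cdot \widetilde\pi(x)$ exists in $M_0(r,n)$, hence if and only if $\widetilde\pi(x)=n[0]$, i.e. $x\in\widetilde N$. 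Writing the limit as a fixed point $x_i$ gives $\widetilde N=\bigcup_i M(S)^-_i$.

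Next, to prove that this decomposition is filtrable, I would repeat the argument of Theorem \ref{filtrable} applied to $\widetilde N$ instead of $M(r,n)$. Since $M(S)$ is quasi-projective, Sumihiro's theorem \cite[Theorem 1]{sumi} produces an equivariant embedding $M(S)\hookrightarrow \mathbb P^l$; composing with the closed equivariant inclusion $\widetilde N\hookrightarrow M(S)$ gives an equivariant embedding $\widetilde N\hookrightarrow \mathbb P^l$. Now one transports the filtration of $\mathbb P^l$ coming from the Bia{\l}ynicki-Birula minus-decomposition (which is filtrable by \cite[Theorem 3]{BB'}) down to $\widetilde N$ by intersecting with it, just as in the proof of Theorem \ref{filtrable}. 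The resulting filtration of $\widetilde N$ by closed subvarieties has strata precisely the cells $M(S)^-_i$, so $\widetilde N$ admits a filtrable minus-decomposition.

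Finally, the statement that each cell is an affine space follows from \cite[Theorem 4.1(b)]{BB}: the cells $M(S)^-_i$ are regarded as locally closed subsets of the nonsingular quasi-projective variety $M(S)$, and the fixed points of $\C^*$ on $M(S)$ are isolated (by Lemma \ref{fixpoints}), so each minus-cell is isomorphic to $T_{x_i}(M(S))^-$, i.e. an affine space. The main obstacle I expect is purely in step one: one has to make sure that properness of $\widetilde\pi$ really does let us lift the non-existence of limits from $M_0(r,n)$ back to $M(S)$, so that points outside $\widetilde N$ genuinely contribute nothing to the union of minus-cells. Once this is justified, the other steps are direct adaptations of the arguments already carried out for $M(r,n)$ and $\pi^{-1}(n[0])$.
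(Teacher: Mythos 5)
Your proposal is correct and follows essentially the same route as the paper: the set-theoretic identity is deduced from the non-existence of $\lim_{t\to\infty}t\cdot y$ for $y\neq n[0]$ in $M_0(r,n)$ together with equivariance and properness of $\widetilde\pi$, filtrability comes from transporting the filtration of $\mathbb P^l$ through the composed equivariant embedding $\widetilde N\hookrightarrow M(S)\hookrightarrow\mathbb P^l$, and the affineness of the cells follows from \cite[Theorem 4.1(b)]{BB} applied to the minus-cells viewed inside the nonsingular $M(S)$. The extra care you take in spelling out why properness lets you lift the non-existence of limits back to $M(S)$ is a welcome elaboration of a step the paper leaves implicit, but it is not a departure from the paper's argument.
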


\proof
From \cite[Theorem 3.5(3)]{NY} any $x\neq n[0]$ in $M_0(r,n)$ the limit $\lim_{t\to \infty}t \cdot x$ does not exist. Hence by the projective morphism $\tilde{\pi}$ we deduce that $\widetilde{N}=\bigcup_i M(S)_i^-.$
To show this is a filtrable decomposition, the proof goes through the same lines as that of Theorem \ref{union of minus cells}.
\endproof

Let $J=\{1,2,\dots, m\}$ be the set of indices so that the fixed points set $X^{\C^*}=\{x_j \mid j\in J\}$ be ordered to yield the following filtrations
$$M(S)=M(S)_1\supset M(S)_2 \supset \cdots \supset M(S)_{m} \supset M(S)_{m+1}=\emptyset,
$$
$$\emptyset=\gamma_1\subset \gamma_2 \subset \cdots \subset \gamma_{m} \subset \gamma_{m+1}=\widetilde{N},$$
where $M(S)_{i}\setminus M(S)_{i+1}=M(S)^+_i,$ $\gamma_{i+1}\setminus \gamma_{i}=M(S)^-_i$ and the decomposition of $M(S)$ \big(resp. $\widetilde{N}$\big) is given by $M(S)= \bigcup_{j\in J} M(S)^+_j$ \big(resp. $\widetilde{N}= \bigcup_{j\in J} M(S)^-_j$\big).

Define the subsets $M(S)^+_{\leq j}:= \bigcup_{i\leq j}M(S)^+_i$ and $M(S)^-_{\leq j}:= \bigcup_{i\leq j}M(S)^-_i.$ Then all the results in Section \ref{more topology} hold and the proofs of the following are basically the same.

\begin{lemma} \label{lemma inclusions for M(S)}
For each $j,$ there is an inclusion $M(S)^-_{\leq j} \hookrightarrow M(S)^+_{\leq j}.$
\end{lemma}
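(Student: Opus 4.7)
The plan is to adapt, essentially verbatim, the proof of the analogous inclusion $M^-_{\leq j} \hookrightarrow M^+_{\leq j}$ for $M(r,n)$ given in Section \ref{more topology}. All ingredients are already in place: the filtrable plus-decomposition of $M(S)$ from Theorem \ref{BB decomp of M(S)} and the filtrable minus-decomposition of $\widetilde{N}$ from Theorem \ref{BB decomp of N}, together with the two matched filtrations (one decreasing, one increasing) displayed just before the lemma.

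First I would fix an index $j$ and work with the single piece $M(S)^-_j$ before taking unions. The key observation is that the intersection $M(S)^-_j \cap M(S)^+_j$ equals the singleton $\{x_j\}$: any $x$ in this intersection satisfies both $\lim_{t\to 0} t\cdot x = x_j$ and $\lim_{t\to \infty} t\cdot x = x_j$, which forces the orbit $\C^* \cdot x$ to be the point $x_j$ (otherwise the orbit closure would be a non-constant rational curve meeting the fixed locus only at $x_j$ from both sides, contradicting that $x$ itself lies on this orbit and is fixed). Since $M(S)^+_j = M(S)_j \setminus M(S)_{j+1}$, and $x_j \in M(S)_j \setminus M(S)_{j+1}$ while $M(S)_{j+1} \subset M(S)_j$, this yields $M(S)^-_j \cap M(S)_{j+1} = \emptyset$.

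Next, using the identification $M(S)_{j+1} = \bigcup_{i \geq j+1} M(S)^+_i$ provided by the filtrable plus-decomposition, the emptiness above translates into $M(S)^-_j \subset \bigcup_{i \leq j} M(S)^+_i = M(S)^+_{\leq j}$. Taking the union over all indices up to $j$ gives
$$M(S)^-_{\leq j} = \bigcup_{i \leq j} M(S)^-_i \subset M(S)^+_{\leq j},$$
which is the desired inclusion.

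There is no real obstacle here; the proof is purely formal bookkeeping about the two filtrations, and the only thing to be careful about is the opposite ordering conventions (the plus-filtration is decreasing while the minus-filtration is increasing). The argument does not use anything specific to $\mathbb P^2$ versus $S$, only that both $M(S)$ and $\widetilde{N}$ carry compatible filtrable Bia\l ynicki-Birula decompositions indexed by the same finite fixed points set, which is exactly what Theorems \ref{BB decomp of M(S)} and \ref{BB decomp of N} provide.
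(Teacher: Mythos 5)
Your proposal is correct and follows essentially the same route as the paper: the paper simply declares that the proof is ``basically the same'' as the lemma for $M(r,n)$ in Section \ref{more topology}, and you carry out exactly that adaptation, deducing $M(S)^-_j \cap M(S)_{j+1} = \emptyset$ from $M(S)^-_j \cap M(S)^+_j = \{x_j\}$ and then taking unions over $i \leq j$. The only difference is that you additionally spell out, via the orbit-closure argument, why the intersection $M(S)^-_j \cap M(S)^+_j$ is the single fixed point, a step the paper's model proof merely asserts.
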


\begin{theorem}\label{iso homology groups M(S)}
The inclusion $M(S)^-_{\leq j} \hookrightarrow M(S)^+_{\leq j}$ induces isomorphisms of homology groups with integer coefficients $H_k(M(S)^-_{\leq j}) \stackrel{\cong}\longrightarrow H_k(M(S)^+_{\leq j})$ for all $j$ and all $k.$ In particular the inclusion $\widetilde{N} \hookrightarrow M(S)$ induces isomorphisms $H_k(\widetilde{N}) \stackrel{\cong}\longrightarrow H_k(M(S))$ for all $k.$
\end{theorem}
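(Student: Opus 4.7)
\emph{Strategy.} The plan is to mirror the proof of Theorem \ref{iso homology groups} essentially verbatim, with $M(S)$ replacing $M(r,n)$ and $\widetilde N$ replacing $\p$. All structural ingredients are already in place: Theorem \ref{BB decomp of M(S)} provides a filtrable Bia{\l}ynicki-Birula plus-decomposition of $M(S)$ into affine cells $M(S)^+_i$; Theorem \ref{BB decomp of N} provides the analogous filtrable minus-decomposition of $\widetilde N$ into affine cells $M(S)^-_i$; and Lemma \ref{lemma inclusions for M(S)} furnishes the inclusions $M(S)^-_{\leq j} \hookrightarrow M(S)^+_{\leq j}$ at every level.

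\emph{Induction on $j$.} For $j=1$ both $M(S)^-_1$ and $M(S)^+_1$ are affine, hence contractible, so the inclusion trivially induces isomorphisms on all $H_k$. For the inductive step, assume $H_k(M(S)^-_{\leq j}) \stackrel{\cong}{\to} H_k(M(S)^+_{\leq j})$ for all $k$, and form the commutative ladder given by the long exact sequences in homology of the pairs $(M(S)^-_{\leq j+1},M(S)^-_{\leq j})$ and $(M(S)^+_{\leq j+1},M(S)^+_{\leq j})$, joined by the inclusion-induced vertical arrows. The inductive hypothesis supplies isomorphisms on the terms $H_k(M(S)^\pm_{\leq j})$, so by the five lemma it suffices to prove that the map on relative groups is an isomorphism.

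\emph{Relative homology via Thom spaces.} I would identify both relative groups with the homology of a Thom space attached to the fixed point $x_{j+1}$. On the minus side, filtrability together with \eqref{closure} (applied to the filtration of $\widetilde N$) ensures $M(S)^-_{\leq j}$ is closed in $M(S)^-_{\leq j+1}$; by \cite[Proposition 2.22]{Hatcher} the relative homology equals that of the quotient $M(S)^-_{\leq j+1}/M(S)^-_{\leq j}$, which is the one-point compactification of the affine cell $M(S)^-_{j+1}$, and hence the Thom space of its normal bundle restricted to the fixed point $x_{j+1}$. On the plus side, excision applied to a tubular neighborhood $V(M(S)^+_{j+1})$ of $M(S)^+_{j+1}$ in $M(S)^+_{\leq j+1}$ gives the same conclusion for the plus-cell. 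Since $M(S)^\pm_{j+1}$ both deformation retract to $x_{j+1}$, each normal bundle collapses to its fiber at $x_{j+1}$, giving $H_k(M(S)^\mp_{\leq j+1},M(S)^\mp_{\leq j}) \cong H_k(T(N^\mp_{x_{j+1}}),p)$. These fibers are the positive and negative weight subspaces of $T_{x_{j+1}}M(S)$ under the $\C^*$-action, so each Thom space is a suspension of a sphere and its reduced homology depends only on the fiber dimension; by Theorem \ref{BB finite}(3) these dimensions coincide with those of the cells, which match across plus and minus at each fixed point of a smooth variety.

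\emph{Conclusion and main obstacle.} The five lemma closes the induction, and taking $j=m$ produces the asserted global isomorphism $H_k(\widetilde N) \stackrel{\cong}{\to} H_k(M(S))$. The one point that requires genuine verification (the rest being diagram chasing identical to the projective-plane case) is that the cells of $M(S)^+$ and of $\widetilde N$ are indeed affine and filtrable with the correct closure behaviour at every level; this is exactly the content of Theorems \ref{BB decomp of M(S)}--\ref{BB decomp of N} together with Remark \ref{rem}(1), which apply verbatim because $M(S)$ is quasi-projective and admits a regular one-parameter subgroup action by Lemma \ref{one-parameter subgroup}. Thus no new technical input beyond Section \ref{more topology} is required.
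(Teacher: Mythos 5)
Your proposal is correct and matches the paper's approach exactly: the paper's proof of Theorem \ref{iso homology groups M(S)} simply invokes Lemma \ref{lemma inclusions for M(S)} and states that the argument is the same as that of Theorem \ref{iso homology groups}, which is precisely the verbatim transfer (induction on $j$, Thom-space identification of the relative groups, five lemma) that you carry out.
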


\proof
Using Lemma \ref{lemma inclusions for M(S)} the proof is the same as that of Theorem \ref{iso homology groups}.
\endproof

\begin{lemma}
$\widetilde{N}$ is simply connected.
\end{lemma}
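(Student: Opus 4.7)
The plan is to mimic the proof of Lemma \ref{p 1-connected} almost verbatim, now applied to $\widetilde{N}$ in place of $\p$. All the structural ingredients that were used in the $\p$ case have already been established for $\widetilde{N}$ in the preceding theorems, so the argument should transfer with only notational changes.

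First I would invoke Theorem \ref{BB decomp of N}, which gives a filtrable Bia{\l}ynicki-Birula decomposition $\widetilde{N}=\bigcup_{j\in J} M(S)^-_j$ into affine cells. By the filtrability property (analogous to Remark \ref{rem}(2) in the earlier setting), the decomposition has a unique cell $M(S)^-_m$ of maximal dimension, and its complement is closed in $\widetilde{N}$, so $M(S)^-_m$ is open in $\widetilde{N}$. Since $M(S)^-_m$ is isomorphic to an affine space, it is simply connected, i.e.\ $\pi_1(M(S)^-_m)=0$.

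Next I would apply \cite[Theorem 3(a)]{Car.Som} (the same Carrell--Sommese input used in the proof of Lemma \ref{p 1-connected}) to the inclusion $M(S)^-_m \hookrightarrow \widetilde{N}$ of the top-dimensional cell of a filtrable Bia{\l}ynicki-Birula decomposition. That theorem asserts that such an inclusion induces an isomorphism $\pi_1(M(S)^-_m) \xrightarrow{\;\cong\;} \pi_1(\widetilde{N})$. Combining this with the vanishing of $\pi_1(M(S)^-_m)$ yields $\pi_1(\widetilde{N})=0$, concluding the proof.

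The one subtlety to check is that the hypotheses of \cite[Theorem 3(a)]{Car.Som} really are satisfied in this generalized setting. Unlike $M(S)$, the subvariety $\widetilde{N}$ is in general singular (just as $\p$ was singular in the $\mathbb P^2$ case). However, as noted in the remark following Theorem \ref{union of minus cells}, the equivariant local embedding of a neighborhood of each point of $\widetilde{N}$ into projective space provided by \cite[Theorem 1]{sumi} and \cite[Lemma 8]{sumi} (composed with the equivariant embedding $\widetilde{N} \hookrightarrow M(S) \hookrightarrow \mathbb P^l$) puts $\widetilde{N}$ in the framework where the results of \cite[\S 1]{konarski} apply, exactly as in the $\p$ case. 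So the Carrell--Sommese machinery applies uniformly, and this will be the only step requiring any justification beyond citing the analogue from Section \ref{more topology}.
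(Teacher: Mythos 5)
Your proposal is correct and follows exactly the route the paper intends: the paper gives no explicit proof here, stating only that the results of Section \ref{more topology} carry over with essentially the same arguments, and your argument is the verbatim transfer of the proof of Lemma \ref{p 1-connected} (top-dimensional cell of the filtrable decomposition from Theorem \ref{BB decomp of N}, plus \cite[Theorem 3(a)]{Car.Som}). Your added remark on why the Carrell--Sommese input still applies to the singular variety $\widetilde{N}$ is a welcome extra justification consistent with the remark following Theorem \ref{union of minus cells}.
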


\begin{theorem}\label{homotopy equivalence M(S)}
$\widetilde{N}$ is homotopy equivalent to $M(S).$
\end{theorem}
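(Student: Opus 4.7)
The strategy is to mirror the proof of Theorem \ref{homotopy equivalence} verbatim, since all the necessary ingredients for the toric setting $(M(S),\widetilde{N})$ are already in place: we have the homology isomorphism of Theorem \ref{iso homology groups M(S)}, the simple connectedness of $M(S)$ (from the Corollary after Theorem \ref{BB decomp of M(S)}), and the simple connectedness of $\widetilde{N}$ (from the Lemma just above). My plan is therefore to combine these with Hurewicz (relative and absolute) and Whitehead's theorem applied to the inclusion $\widetilde{N}\hookrightarrow M(S)$.

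First I would write down the long exact sequence of homotopy groups of the pair $(M(S),\widetilde{N})$, paired alongside the long exact sequence of homology of the same pair, with vertical Hurewicz maps between them. From the absolute isomorphism $H_k(\widetilde{N})\xrightarrow{\cong} H_k(M(S))$ supplied by Theorem \ref{iso homology groups M(S)}, the homology long exact sequence forces $H_k(M(S),\widetilde{N})=0$ for all $k$. Since both $M(S)$ and $\widetilde{N}$ are $1$-connected, the absolute Hurewicz theorem gives $\pi_2(M(S))\cong H_2(M(S))$ and $\pi_2(\widetilde{N})\cong H_2(\widetilde{N})$, which, chased through the ladder diagram exactly as in the proof of Theorem \ref{homotopy equivalence}, shows $\pi_2(M(S),\widetilde{N})=0$, so the pair is $2$-connected.

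Next I would iterate using the relative Hurewicz theorem: once $(M(S),\widetilde{N})$ is known to be $(k-1)$-connected, the relative Hurewicz isomorphism gives $\pi_k(M(S),\widetilde{N})\cong H_k(M(S),\widetilde{N})=0$, upgrading the connectivity of the pair by one. By induction $\pi_k(M(S),\widetilde{N})=0$ for every $k\geq 1$, and then the long exact sequence of homotopy groups collapses to give
\[
\pi_k(\widetilde{N})\xrightarrow{\;\cong\;}\pi_k(M(S))\qquad\text{for all } k\geq 0.
\]
Finally, since $\widetilde{N}$ and $M(S)$ have the homotopy type of CW complexes (both being complex algebraic varieties), Whitehead's theorem upgrades this system of isomorphisms on all homotopy groups to a genuine homotopy equivalence induced by the inclusion $\widetilde{N}\hookrightarrow M(S)$, completing the proof.

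I do not anticipate any essential obstacle: the argument is structurally identical to that of Theorem \ref{homotopy equivalence}, and the only places where the two settings could diverge (the homology isomorphism and the simple connectedness of both spaces) have already been established in the preceding results of Section \ref{decomp M and gamma}. The one point worth double-checking is the applicability of Whitehead's theorem, which requires the CW-homotopy type; this is standard for quasi-projective complex varieties and the compact subvariety $\widetilde{N}\subset M(S)$, so no additional work beyond citing \cite{Rotman} (as in Theorem \ref{homotopy equivalence}) is needed.
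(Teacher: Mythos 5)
Your proposal is correct and follows exactly the route the paper intends: the paper states that all the results of Section \ref{more topology} carry over with essentially identical proofs, and Theorem \ref{homotopy equivalence M(S)} is proved by repeating the argument of Theorem \ref{homotopy equivalence} with Theorem \ref{iso homology groups M(S)} and the two simple-connectedness statements as inputs, followed by Hurewicz and Whitehead. Your extra remark about the CW-homotopy type needed for Whitehead's theorem is a sensible precaution but does not change the argument.
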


\section{Concluding remarks}

The results of Section \ref{more topology} and \ref{gener} hold for all the moduli spaces on nonsingular projective toric surfaces $X$ having a projective morphism onto $M_0(r,n)$ which is equivariant under the torus action. Indeed, to generalize these results it is enough to construct a projective morphism
$$\widetilde{\pi} \colon  M(X) \to M_0(r,n).$$
However, this may not be an easy task.

As we have seen in the last section, to construct such a morphism we have considered the morphism
\begin{align} \label{beta}
\beta: \quad M(X) &\longrightarrow M(r,n) \notag \\
\curly E \quad &\longmapsto p_*\curly E(-lC),
\end{align}
for a sufficiently large $l.$

It is to notice that when the morphism $p$ is not of degree 1, the direct image of the structure sheaf $p_*\curly \Oo_X(-lC)$ is not in general isomorphic to $\Oo_{\mathbb P^2}.$ In this case a morphism defined as in \eqref{beta} does not do the job since we end up in a moduli space different from $M(r,n).$

The reason for constructing a morphism $\widetilde{\pi}$ lies in the fact that the moduli space $M_0(r,n)$ has a unique fixed point $n[0]$ and the fiber over it defines a compact subvariety containing all the fixed points of the moduli space we are considering. Moreover $M_0(r,n)$ has a description into ADHM data that was intensively used in the proofs of the results in the last two sections.

\appendix
\section{Useful statements}

\begin{lemma}\label{lemma}
Let $X$ be a smooth projective surface and let $$0\ra \curly F\ra \curly F' \ra \curly Q\ra 0,$$ be an exact sequence of sheaves of $\Oo_X$-modules where $\curly F$ is torsion free, $\curly F'$ is reflexive (equivalently locally free since every reflexive sheaf on a surface is locally free) of the same rank and $\curly Q$ is zero dimensional; then $\curly F'\iso \curly F{\dual\dual}.$
\end{lemma}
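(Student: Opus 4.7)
The plan is to apply $\mathcal Hom(-,\Oo_X)$ to the short exact sequence and read off the result from vanishing statements for low-degree Ext-sheaves of a zero-dimensional sheaf on a smooth surface. Applied to the given sequence, the local Ext long exact sequence reads
\[
0 \to \mathcal Hom(\curly Q,\Oo_X) \to (\curly F')\dual \to \curly F\dual \to \mathcal Ext^1(\curly Q,\Oo_X) \to \mathcal Ext^1(\curly F',\Oo_X) \to \cdots.
\]
Because $\curly F'$ is locally free, $\mathcal Ext^i(\curly F',\Oo_X)=0$ for $i\geq 1$, so the sequence immediately gives the isomorphism $(\curly F')\dual \iso \curly F\dual$ as soon as the first two $\mathcal Ext$-terms involving $\curly Q$ vanish.

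The key step is therefore to show $\mathcal Hom(\curly Q,\Oo_X)=\mathcal Ext^1(\curly Q,\Oo_X)=0$. Since $\curly Q$ is zero-dimensional and $X$ is a smooth surface, $\curly Q$ is supported in codimension $2$, and its stalk at any point of its support is an Artinian module over a $2$-dimensional regular local ring. A Koszul (or any free) resolution of such a module has length $2$, and local duality (equivalently, a direct computation on the Koszul complex) shows that $\mathcal Ext^i(\curly Q,\Oo_X)$ is concentrated in degree $i=2$. I would either cite this standard fact (e.g.\ from Huybrechts--Lehn, \emph{The Geometry of Moduli Spaces of Sheaves}, where the analogous statement is used) or sketch the one-line Koszul argument. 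In particular $\mathcal Hom(\curly Q,\Oo_X)=0$ already follows from the torsion nature of $\curly Q$ together with the torsion-freeness of $\Oo_X$, which is perhaps easier to state.

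Given these vanishings, the map $(\curly F')\dual \to \curly F\dual$ is an isomorphism. Dualizing once more produces an isomorphism $\curly F\dual\dual \iso (\curly F')\dual\dual$, and since $\curly F'$ is reflexive we have $(\curly F')\dual\dual\iso \curly F'$, which yields the required isomorphism $\curly F'\iso \curly F\dual\dual$. To see that this isomorphism is the natural one (coinciding with the map induced by $\curly F\hookrightarrow \curly F'$ after double-dualizing), one uses naturality of the biduality morphism: the composition $\curly F \to \curly F\dual\dual \to (\curly F')\dual\dual \iso \curly F'$ agrees with the inclusion $\curly F\hookrightarrow \curly F'$, so the map $\curly F\dual\dual \to \curly F'$ obtained above is exactly the double dual of the inclusion. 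The main (mild) obstacle is thus the justification of the $\mathcal Ext$-vanishing for a zero-dimensional sheaf; everything else is formal.
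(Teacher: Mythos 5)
Your argument is correct, and it follows the same overall skeleton as the paper's proof (dualize the sequence, identify the duals, dualize again and use reflexivity of $\curly F'$), but it differs at the one non-formal step. The paper does \emph{not} assert the vanishing of $\curly Ext^1(\curly Q,\Oo_X)$ up front: it keeps this sheaf as a zero-dimensional cokernel $\curly Q'$ in the sequence $0\ra \curly F'{\dual}\ra \curly F{\dual}\ra \curly Q'\ra 0$ and then kills it a posteriori, using the fact that the reflexive sheaves $\curly F{\dual}$ and $\curly F'{\dual}$ are \emph{normal} (Hartshorne, ``Stable reflexive sheaves''): since sections of a normal sheaf extend uniquely across the codimension-two support $S$ of $\curly Q'$, the isomorphism $\curly F'{\dual}|_{U\setminus S}\iso \curly F{\dual}|_{U\setminus S}$ forces $\curly F'{\dual}(U)\iso\curly F{\dual}(U)$ for every open $U$, hence $\curly Q'=0$. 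You instead invoke the standard depth/codimension bound $\curly Ext^i(\curly Q,\Oo_X)=0$ for $i<\cod \Supp\curly Q=2$ (Huybrechts--Lehn Prop.\ 1.1.6, or the Koszul computation for an Artinian module over a two-dimensional regular local ring), which yields $\curly F'{\dual}\iso\curly F{\dual}$ immediately. Your route is shorter and arguably more standard, at the cost of citing (or proving) the local $\curly Ext$ vanishing; the paper's route avoids that input but leans on normality of reflexive sheaves instead. Your closing remark on naturality of the biduality map is a worthwhile addition that the paper omits.
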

\begin{proof}
Dualizing the exact sequence $0\ra \curly F\ra \curly F' \ra \curly Q\ra 0,$ \textit{i.e.} applying the functor $\curly Hom(\cdot ,\Oo_{X}),$ we get
\[
0 \ra \curly Hom(\curly F',\Oo_{X})  \ra \curly Hom(\curly F,\Oo_{X}) \ra \curly Ext^1(\curly Q, \Oo_{X})\ra 0.
\]
Note that $\curly Hom( \curly Q,\Oo_X)=0,$ and $\curly Ext^1( \curly F, \curly G)=0$ for any coherent sheaf $\curly G$, where $\curly F$ is locally free~\cite[Chap. III, Ex. 6.5(a)]{Hartshorne}, in particular $\curly Ext^1(\curly F',\Oo_{X})$ vanishes.
We then get
\[
0 \ra \curly F'{\dual} \ra \curly F{\dual} \ra \curly Q' \ra 0,
\]
where $\curly Q':=\curly Ext^1(\curly Q, \Oo_{X})$ is a zero dimensional sheaf. Call $S$ the support of $\curly Q'$ then $S \subset X$ is a closed subset of codimension $2.$ \\Since $\curly F{\dual}$ and $\curly F'{\dual}$ are reflexive then by \cite[Prop.1.6]{Hartshorne1} they are normal. Hence for every open $U\subset X,$ we have
\begin{center}
\begin{tikzpicture}
\matrix(m)[matrix of math nodes, row sep=2em, column sep=1.5em, text height=1.5ex, text depth=0.25ex]
{0& \curly F'{\dual}(U)& \curly F{\dual}(U)& \curly Q'(U)&0\\
0&\curly F'{\dual}(U\setminus S)& \curly F{\dual}(U\setminus S) & \curly Q'(U\setminus S)& 0\\};
\path[->,font=\scriptsize]
(m-1-1) edge (m-1-2)
(m-1-2) edge (m-1-3)
(m-1-2) edge node[auto] {$\cong$} (m-2-2)
(m-1-3) edge (m-1-4)
(m-1-3) edge node[auto] {$\cong$} (m-2-3)
(m-1-4) edge (m-1-5)
(m-2-1) edge (m-2-2)
(m-2-2) edge (m-2-3)
(m-2-3) edge (m-2-4)
(m-2-4) edge (m-2-5);
\end{tikzpicture}
\end{center}
Note that $\curly Q'(U\setminus S)=0$ since $\curly Q'$ is supported on $S.$ Thus the injective morphism $\curly F'{\dual}(U\setminus S) \rightarrow \curly F{\dual}(U\setminus S)$ is an isomorphism. This yields an isomorphism
$\curly F'{\dual}(U) \xrightarrow{\cong} \curly F{\dual}(U)$ and we get $\curly Q'(U)=\curly Ext^1(\curly Q, \Oo_{X})(U)=0$  for every open subset $U \subset X.$ Hence we find $\curly Q'=\curly Ext^1(\curly Q, \Oo_{X})=0$ and $\curly F'{\dual}  \xrightarrow{\cong} \curly F{\dual}.$ Dualizing again one gets $\curly {F'{\dual\dual}}  \iso \curly F{\dual\dual} .$ Since $\curly F'$ is reflexive it follows that $\curly {F'}  \iso \curly F{\dual\dual}.$
\end{proof}

\begin{lemma} \label{lemma1}
There is an integer $l_0$ such that for any $\curly E \in \widetilde M(r, k, n)$ and any $l\geq l_0,$ we have:
\begin{enumerate}
  \item $R^1p_*\curly E(-lC)=0$
  \item There is a canonical inclusion $\curly E(-lC) \hookrightarrow p^*\big(p_*\curly E(-lC)\big)\dual\dual.$
  \item $\big(p_*\curly E(-lC)\big)\dual\dual \cong (p_*\curly E)\dual\dual.$
\end{enumerate}
\end{lemma}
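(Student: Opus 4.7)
The plan is to prove the three items in the order (1), (3), (2), since the construction in (2) will reuse both the vanishing and a surjectivity of the counit of adjunction that come out of the same application of Serre's theorem used for (1). For (1), I will invoke relative Serre vanishing: under the running assumptions $p\colon S\to\proj$ is a projective birational morphism of smooth toric surfaces contracting $C$ to a finite set of points, so $\Oo_S(-C)$ is $p$-ample. Applying Serre's theorem for projective morphisms to the coherent sheaf $\curly E$ twisted by powers of $\Oo_S(-C)$ yields an integer $l_0$ such that, for every $l\geq l_0$, two conclusions hold simultaneously: (a) $R^ip_*\curly E(-lC)=0$ for all $i\geq 1$, which is statement (1), and (b) the counit of adjunction $\epsilon_l\colon p^*p_*\curly E(-lC)\twoheadrightarrow \curly E(-lC)$ is surjective. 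I use (b) crucially in step (2).

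For (3), the plan is to compare $p_*\curly E(-lC)$ with $p_*\curly E$. Tensoring the inclusion $\Oo_S(-lC)\hookrightarrow\Oo_S$ (valid since $lC$ is effective) with $\curly E$ produces an injection $\curly E(-lC)\hookrightarrow \curly E$ whose cokernel is supported on $C$. Pushing forward and using left exactness, I obtain an injection $p_*\curly E(-lC)\hookrightarrow p_*\curly E$ whose cokernel is a subsheaf of $p_*(\curly E/\curly E(-lC))$; the latter is supported on $p(C)$, a finite set of points, so the cokernel is zero-dimensional. Both sheaves are torsion-free on the smooth surface $\proj$, and for such sheaves the reflexive hull is insensitive to modifications in codimension two (a standard Hartogs-type argument, parallel to Lemma \ref{lemma}). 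Hence the induced morphism $(p_*\curly E(-lC))\dual\dual\to (p_*\curly E)\dual\dual$ is an isomorphism.

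For (2), fix $l\geq l_0$ and set $\mathcal{K}:=\ker\epsilon_l$. The counit $\epsilon_l$ restricts to an isomorphism over the open set where $p$ is an isomorphism (which contains $S\setminus C$), so $\mathcal{K}$ is supported in codimension at least one on $S$ and is in particular a torsion sheaf. Next, pull back the canonical injection $p_*\curly E(-lC)\hookrightarrow (p_*\curly E(-lC))\dual\dual$, injective because $p_*\curly E(-lC)$ is torsion-free, to obtain
$$\psi\colon p^*p_*\curly E(-lC)\longrightarrow p^*\bigl((p_*\curly E(-lC))\dual\dual\bigr).$$
The target is locally free on $S$, since $(p_*\curly E(-lC))\dual\dual$ is reflexive on the smooth surface $\proj$ hence locally free, and $p^*$ preserves local freeness. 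Any morphism from a torsion sheaf to a torsion-free sheaf vanishes, so $\psi$ annihilates $\mathcal{K}$ and therefore descends through the surjection $\epsilon_l$ to a morphism
$$\overline\psi\colon \curly E(-lC)\longrightarrow p^*\bigl((p_*\curly E(-lC))\dual\dual\bigr).$$
This $\overline\psi$ restricts to an isomorphism on $S\setminus C$ (where both $\epsilon_l$ and $\psi$ are), so $\ker\overline\psi\subset\curly E(-lC)$ is a subsheaf of a torsion-free sheaf supported on the divisor $C$; such a subsheaf must vanish. Thus $\overline\psi$ is the desired canonical injection.

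The principal obstacle is item (1)(b), the surjectivity of the counit $\epsilon_l$ for $l\gg 0$, which hinges on the $p$-ampleness of $\Oo_S(-C)$. Without it one obtains only a rational map $\curly E(-lC)\dashrightarrow p^*((p_*\curly E(-lC))\dual\dual)$ defined on $S\setminus C$, and because $C$ is a divisor no Hartogs-type extension across codimension one is available. Once the surjective counit is in place, the factorization argument in (2) and the codimension-two comparison in (3) are formal.
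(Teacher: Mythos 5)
Your proof is correct, and item (1) follows the paper's route exactly (relative Serre vanishing, i.e.\ Hartshorne III.8.8, resting on the $p$-ampleness of $\Oo_S(-C)$, which the paper phrases as $\Oo_S(-C)=\Oo_S(1)$), but items (2) and (3) take a genuinely different path. For (2) the paper upgrades the counit $p^*p_*\curly E(-lC)\to\curly E(-lC)$ to an isomorphism by asserting that $p^*p_*\curly E(-lC)$ is torsion-free; since $p^*$ of a torsion-free sheaf can acquire torsion along the exceptional locus, your weaker use of the counit --- surjectivity only, combined with the observation that its torsion kernel must die in the locally free sheaf $p^*\bigl((p_*\curly E(-lC))\dual\dual\bigr)$ --- is actually more robust, at the cost of the extra descent-and-injectivity check at the end. (One small imprecision there: $\psi$ need not be an isomorphism on all of $S\setminus C$, since $p_*\curly E(-lC)$ may fail to be reflexive at finitely many points outside $p(C)$; but generic injectivity of $\overline\psi$ is all you need to kill the torsion kernel, so the argument survives.) For (3) the paper twists the inclusion from (2) by $\Oo_S(lC)$, pushes forward using $p_*\Oo_S(lC)\cong\Oo_{\mathbb P^2}$, and invokes Lemma \ref{lemma}; your direct comparison of $p_*\curly E(-lC)$ with $p_*\curly E$ via the effective-divisor inclusion is shorter, independent of (2), and needs only the standard fact that torsion-free sheaves on a smooth surface agreeing outside a finite set of points have isomorphic reflexive hulls. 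Neither you nor the paper addresses the uniformity of $l_0$ over all $\curly E\in\widetilde M(r,k,n)$ (which requires boundedness of the family), so that is not held against you.
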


\begin{proof}
Note that $\curly E(-lC)=\curly E(l)$ since the ideal sheaf of the exceptional divisor $C$ on $S$ is given by $\Oo_{S}(-C)=~\Oo_{S}(1)$ see ~\cite[IV, Lemma 4.1.(b)]{Fulton-Lang}.
\begin{enumerate}
  \item The proof can be found in ~\cite[III. Theorem 8.8.(c)]{Hartshorne}
  \item From ~\cite[III. Theorem 8.8.(a)]{Hartshorne} the natural map $p^*p_*\curly E(-lC)\longrightarrow \curly E(-lC)$ is surjective, hence one gets an exact sequence:
      $$0\longrightarrow \curly T \longrightarrow p^*p_*\curly E(-lC)\longrightarrow \curly E(-lC)\longrightarrow 0.$$
      Since $p^*p_*\curly E(-lC)$ and $\curly E(-lC)$ have the same rank, it follows that $\curly T$ is a torsion sheaf, but $p^*p_*\curly E(-lC)$ is torsion free, then $\curly T=0$ and $p^*p_*\curly E(-lC)\longrightarrow \curly E(-lC)$ is an isomorphism.

    On the other hand, there is a canonical inclusion $p^*(p_*\curly E(-lC))\hookrightarrow p^*\big(p_*\curly E(-lC)\big)\dual\dual,$  so we have the following diagram
    \begin{center}
    \begin{tikzpicture}[>=angle 90]
    \matrix(a)[matrix of math nodes,
    row sep=3em, column sep=2.5em,
    text height=1.5ex, text depth=0.25ex]
    {p^*p_*\curly E(-lC) & p^*\big(p_*\curly E(-lC)\big)\dual\dual\\
    \curly E(-lC)& \\};
    \path[right hook->](a-1-1) edge (a-1-2);
    \path[->](a-1-1) edge node[above, sloped] {$\simeq$} (a-2-1);
    \path[dotted,->](a-2-1) edge (a-1-2);
    \end{tikzpicture}
    \end{center}

Hence the dotted arrow is injective and we get the inclusion $\curly E(-lC) \hookrightarrow p^*\big(p_*\curly E(-lC)\big)\dual\dual.$

  \item Using the inclusion above, and taking the cokernel, one gets the exact sequence
    $$0 \longrightarrow \curly E(-lC) \longrightarrow p^*\big(p_*\curly E(-lC)\big)\dual\dual \longrightarrow \curly Q \longrightarrow 0.$$
    Tensoring by $\Oo_{S}(lC)$ and applying the direct image, the above exact sequence gives
    $$0 \longrightarrow p_*\curly E \longrightarrow p_*\big(p^*\big(p_*\curly E(-lC)\big)\dual\dual \otimes \Oo_{S}(lC)\big) \longrightarrow \cdots  $$

    On the other hand, $p_*\Oo_{S}(lC)=p_*\Oo_{S}(-l)= \Oo_{S}$ see ~\cite[page 76]{Buchdahl}. Hence, substituting and taking the quotient in the above exact sequence, one gets the short exact sequence
    $$0 \longrightarrow p_*\curly E \longrightarrow \big(p_*\curly E(-lC)\big)\dual\dual  \longrightarrow \curly Q' \longrightarrow 0.$$

    Notice that $p_*\curly E$ is torsion free and $ \big(p_*\curly E(-lC)\big)\dual\dual$ is locally free of the same rank, $\curly Q'$ is supported on points, then using Lemma ~\ref{lemma}, we get the isomorphism $\big(p_*\curly E(-lC)\big)\dual\dual \cong (p_*\curly E)\dual\dual.$
\end{enumerate}\vspace{-.7cm}\end{proof}

\def\cprime{$'$}

\end{document}